\documentclass[12pt]{amsart}
\usepackage{amssymb}
\usepackage{enumerate}

\textwidth144mm
\textheight240mm
\oddsidemargin7.5mm
\evensidemargin7.5mm
\topmargin-6mm
\parskip4pt plus2pt minus2pt
\parindent0mm


\newcommand{\N}{{\mathbb N}}
\newcommand{\C}{{\mathbb C}}
\newcommand{\Z}{{\mathbb Z}}

\newcommand{\wand}{wandering domain}

\newcommand{\tef}{transcendental entire function}

\newcommand{\nhd}{neighbourhood}
\newcommand{\sconn}{simply connected}
\newcommand{\mconn}{multiply connected}

\newcommand{\sw}{spider's web}


\theoremstyle{plain}
\newtheorem{theorem}{Theorem}[section]

\newtheorem*{theorem*}{Theorem}
\newtheorem*{definition*}{Definition}
\newtheorem*{proposition*}{Proposition}


\newtheorem{lemma}[theorem]{Lemma}
\theoremstyle{definition}

\theoremstyle{remark}

\theoremstyle{problem}

\theoremstyle{example}

\newenvironment{myindentpar}[1]%
{\begin{list}{}%
         {\setlength{\leftmargin}{#1}}%
         \item[]%
}
{\end{list}}

\begin{document}


\title[The structure of spider's web fast escaping sets]{The structure of spider's web\\ fast escaping sets}

\author{J.W. Osborne}
\address{Department of Mathematics and Statistics \\
   The Open University \\
   Walton Hall\\
   Milton Keynes MK7 6AA\\
   UK}
\email{j.osborne@open.ac.uk}



\subjclass{30D05, 37F10}


\begin{abstract}
Building on recent work by Rippon and Stallard, we explore the intricate structure of the spider's web fast escaping sets associated with certain \tef s. Our results are expressed in terms of the components of the complement of the set (the `holes' in the web).  We describe the topology of such components and give a characterisation of their possible orbits under iteration.  We show that there are uncountably many components having each of a number of orbit types, and we prove that components with bounded orbits are quasiconformally homeomorphic to components of the filled Julia set of a polynomial.  We also show that there are singleton periodic components and that these are dense in the Julia set.    
\end{abstract}

\maketitle

\section{Introduction}
\label{intro}
\setcounter{equation}{0} 

For a \tef\ $ f $, we denote the $ n $th iterate of $ f $ for $ n \in \N $ by $ f^n $.  The Fatou set, $ F(f), $ is the set of points, $ z \in \C, $ such that the family of functions $ \lbrace f^n : n \in \N \rbrace $ is normal in some \nhd\ of $ z $, and the Julia set, $ J(f) $, is the complement of $ F(f) $.  For an introduction to the properties of these sets and the iteration theory of \tef s, we refer to \cite{wB93}.

This paper is concerned with the \textit{escaping set} of $ f $, defined by:
\[ I(f) = \lbrace z \in \C : f^n(z) \to \infty \text{ as } n \to \infty \rbrace, \]
and a subset of $ I(f), $ known as the \textit{fast escaping set}, defined as follows:
\[ A(f) = \lbrace z \in \C : \text{there exists } L \in \N \text{ such that } \vert f^{n+L}(z) \vert \geq M^n(R,f), \text{ for } n \in \N \rbrace.  \]
Here, and throughout the paper,
\[ M(r, f) = \max_{\vert z \vert = r} \vert f(z) \vert, \text{ for } r > 0, \]
while $ M^n(r,f) $ denotes the $ n $th iterate of $ M $ with respect to $ r $, and $ R > 0 $ is chosen so that $ M(r, f) > r $ for $ r \geq R $.  For brevity, we do not repeat this restriction on $ R $ except in formal statements of results, but it should always be assumed to apply.  We frequently abbreviate $ M(r,f) $ to $ M(r) $ where $ f $ is clear from the context.

The escaping set, $ I(f), $ for a general \tef\ $ f $ was first studied by Eremenko \cite{E}, and the fast escaping set, $ A(f), $ was introduced by Bergweiler and Hinkkanen in \cite{BH99}.  The set $ A(f) $ has stronger properties than $ I(f) $, and it now plays an important role in transcendental dynamics.  For a comprehensive treatment of $ A(f) $, including many new results on its properties and references to previous work, we refer to Rippon and Stallard \cite{RS10a}.

One fruitful innovation in \cite{RS10a} is the notion of the \textit{levels} of the fast escaping set. For $ L \in \Z $, the $ L $th level of $ A(f) $ with respect to $ R $ is the set
\[ A_R^L(f) = \lbrace z \in \C : \vert f^n(z) \vert \geq M^{n+L}(R, f), \text{ for } n \in \N, n + L \geq 0 \rbrace, \]   
and, in particular, we define
\[ A_R(f) = A_R^0(f) = \lbrace z \in \C : \vert f^n(z) \vert \geq M^n(R, f), \text{ for } n \in \N \rbrace. \]
Working with the levels of $ A(f) $ leads both to simplified proofs of results obtained previously, and to deeper insights into the structure of $ A(f). $
 
In \cite{RS10a}, Rippon and Stallard define a set $ E $ to be an \textit{(infinite) spider's web} if $ E $ is connected and there exists a sequence $ (G_n) $ of bounded, \sconn\ domains such that:
\begin{itemize}
\item $ G_{n+1} \supset G_n $, for $ n \in \N $;
\item $ \partial G_n \subset E $, for $ n \in \N $, and
\item $ \bigcup_{n \in \N} G_n = \C. $
\end{itemize}
In \cite[Theorem 1.4]{RS10a}, they show that, if $ A_R(f)^c $ has a bounded component, then each of $ A_R(f), A(f) $ and $ I(f) $ is a \sw.  

This \sw\ form of the escaping set differs significantly from the Cantor bouquet structure observed in the escaping sets of many \tef s in the Eremenko-Lyubich class $ \mathcal{B} $ (i.e. functions whose critical and asymptotic values lie in a bounded set).

It transpires that $ A_R(f) $ is a \sw\ for a wide range of \tef s. It was proved in \cite{RS05} that this is the case whenever $ f $ has a \mconn\ component of $ F(f) $, and in \cite[Theorem 1.9]{RS10a} that there are many classes of functions that do not have such a \mconn\ Fatou component but for which $ A_R(f) $ is a \sw. Other examples of functions for which $ A_R(f) $ is a \sw\ are given by Mihaljevi\'{c}-Brandt and Peter \cite{MP1}, and by Sixsmith \cite{S1}.   

When $ A_R(f) $ is a \sw, many strong dynamical properties hold.  For example, in \cite[Theorem 1.6]{RS10a}, it is shown that, if $ A_R(f) $ is a \sw, then:
\begin{itemize}
\item every component of $ A(f)^c $ is compact, and
\item every point of $ J(f) $ is the limit of a sequence of points, each of which lies in a distinct component of $ A(f)^c. $
\end{itemize}

In this paper, we explore further the properties and dynamical behaviour of the components of $ A(f)^c $ when $ A_R(f) $ is a \sw.  We show that, in this situation, the $ A(f) $ \sw\ has an intricate structure, and that, by adapting known results about the components of $ J(f) $  when $ f $ has a \mconn\ Fatou component, we can obtain new results about the components of $ A(f)^c $ for the wider class of functions where $ A_R(f) $ is a \sw.

The remainder of this introduction explains the organisation of the paper, and states the main results.  

In Section \ref{prelim}, we set out some background material.  We summarise the basic properties of $ A_R(f) $ spiders' webs and of the levels of $ A(f) $, as described in \cite{RS10a}.  These properties will be used frequently throughout the paper.  We also prove a number of preliminary results for later use.

In Section \ref{nature}, we prove the following topological properties of the components of $ A(f)^c $ when $ A_R(f) $ is a \sw.  The definitions of buried points and components, and the meaning of `surrounding', are given in Section \ref{prelim}.  

\begin{theorem} 
\label{buried}
Let $ f $ be a \tef, let $ R>0 $ be such that $ M(r, f) > r $ for $ r \geq R $ and let $ A_R(f) $ be a \sw.  Let $ K $ be a component of $ A(f)^c $.  Then:  
\begin{enumerate} [(a)]
\item $ \partial K \subset J(f) $ and $ \text{int } K \subset F(f) $.  In particular, $ \overline{U} \subset K $ for every Fatou component $ U $ for which $ K \cap \overline{U} \neq \emptyset. $
\item Every \nhd\ of $ K $ contains a closed subset of $ A(f) \cap J(f) $ surrounding~$ K $.  If $ K $ has empty interior, then $ K $ consists of buried points of $ J(f) $.
\item If $ f $ has a \mconn\ Fatou component, then every \nhd\ of $ K $ contains a \mconn\ Fatou component surrounding $ K $.  If, in addition, $ K $ has empty interior, then $ K $ is a buried component of $ J(f) $.
\end{enumerate}
\end{theorem}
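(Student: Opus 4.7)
My plan is to prove the three parts in sequence. For part~(a), I would combine two facts from Section~\ref{prelim}: the Fatou-component dichotomy for spider's web $A_R(f)$ (every Fatou component $U$ satisfies either $\overline{U}\subset A(f)$ or $U\cap A(f)=\emptyset$), and the recorded statement from \cite{RS10a} that every point of $J(f)$ is the limit of a sequence of points lying in distinct components of $A(f)^{c}$. The inclusion $\mathrm{int}\,K\subset F(f)$ is then immediate: a Julia point inside $\mathrm{int}\,K$ would possess a neighborhood contained in the single component $K$ yet meeting infinitely many distinct components of $A(f)^{c}$, which is absurd. For the ``in particular'' claim, suppose $U$ is a Fatou component with $K\cap\overline{U}\neq\emptyset$; the dichotomy forces $U\cap A(f)=\emptyset$ (else $\overline{U}\subset A(f)$ would be disjoint from $K\subset A(f)^{c}$), so choosing $w\in K\cap\overline{U}$ makes $U\cup\{w\}$ a connected subset of $A(f)^{c}$ meeting $K$, whence $U\subset K$ and $\overline{U}\subset K$. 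Consequently $K\cap F(f)\subset\mathrm{int}\,K$, and $\partial K\subset J(f)$.

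For part~(b), I would exploit the spider's web structure at every level. Using the preliminary that each $A_R^{L}(f)$, $L\in\Z$, is again an infinite spider's web when $A_R(f)$ is, let $D_L$ denote the bounded complementary component of $A_R^{L}(f)$ containing $K$, with $L\to-\infty$ so that $A_R^{L}(f)$ grows toward $A(f)$. The sets $\overline{D_L}$ then form a nested decreasing family of compact continua containing $K$, and the crucial step is to establish $\bigcap_L \overline{D_L}=K$. Granting this, a standard nested-compacts argument yields $\overline{D_L}\subset V$ for any prescribed neighborhood $V$ of $K$ once $|L|$ is sufficiently large; since $\partial D_L\subset A_R^{L}(f)\cap J(f)\subset A(f)\cap J(f)$ (the boundaries of the level spider's-web holes lying in $J(f)$ by a further preliminary), this produces the desired closed subset of $A(f)\cap J(f)$ inside $V$ surrounding $K$. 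The second assertion of (b) follows from (a): any $z\in K\cap\partial U$ for a Fatou component $U$ would force $\overline{U}\subset K$, hence $U\subset\mathrm{int}\,K=\emptyset$, a contradiction.

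For part~(c), under the hypothesis that $f$ has a multiply connected Fatou component, I would invoke the density-type results for multiply connected Fatou components (Baker wandering domains and their preimages) developed in the Rippon--Stallard programme: when $f$ has such a Fatou component, multiply connected Fatou components accumulate at every scale near every component of $A(f)^{c}$, which upgrades the abstract surrounding set of (b) to a multiply connected Fatou component lying inside $V$ and still surrounding $K$. The buried-component conclusion when $\mathrm{int}\,K=\emptyset$ then follows from (a) exactly as in (b).

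The principal technical obstacle is the identity $\bigcap_L \overline{D_L}=K$ underpinning part~(b). A priori a point $z\in A(f)$ lying on $\partial A_R^{L}$ for every $L$ could slip into the intersection; ruling this out calls for a careful combination of the bounded-complementary-components property of each level spider's web, the maximality of $K$ as a component of $A(f)^{c}$, and the compactness of $A(f)^{c}$-components provided by \cite[Theorem~1.6]{RS10a}. The analogous ``thickening'' step in (c)---replacing an abstract surrounding set by a multiply connected Fatou component---is the secondary difficulty and rests heavily on the dynamics of Baker wandering domains recalled in Section~\ref{prelim}.
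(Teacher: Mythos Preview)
Your outline for parts (a) and the nesting argument in (b) is sound, and you correctly flag the identity $\bigcap_{L}\overline{D_L}=K$ as needing care (the paper glosses over it; one resolves it by passing to a cofinal subsequence of levels whose boundaries are pairwise disjoint, using Lemma~\ref{props}(d)--(e), so that the closures are \emph{strictly} nested and the intersection is a continuum in $A(f)^c$).

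However, there is a genuine gap in your treatment of~(b): the assertion $\partial D_L\subset J(f)$ is \emph{not} a preliminary valid in general. Lemma~\ref{props}(f) gives $L_n\subset J(f)$ (and hence $\partial D_L\subset J(f)$ via Lemma~\ref{props}(e) and complete invariance) \emph{only when $f$ has no multiply connected Fatou component}. When such a component exists, $\partial D_L$ may well meet $F(f)$, so your surrounding set is not contained in $J(f)$. The paper's proof accordingly splits (b) into two cases: without multiply connected Fatou components it uses exactly your argument, and with them it deduces (b) from (c), since the boundary of the surrounding multiply connected Fatou component lies in $A(f)\cap J(f)$.

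The more serious problem is part~(c). You write that you would ``invoke the density-type results for multiply connected Fatou components \ldots\ developed in the Rippon--Stallard programme,'' but no such ready-made result is available; this density near an arbitrary component of $A(f)^c$ is precisely what (c) asserts and must be proved here. The paper's argument is substantive: one fixes a multiply connected Fatou component $U$, uses Lemma~\ref{baker} and Lemma~\ref{props} to position iterates $f^{k}(U)$ and $f^{N+k}(U)$ relative to the fundamental holes $H_N\subset H_{N+P}$, and then pulls back a Jordan curve $\Gamma\subset f^{N+k}(U)$ under a suitable branch of $f^{-(M+N+P)}$ to produce a curve $\gamma$ inside $G_M$ (your $D_{-M}$) that surrounds $G_{M+P}\supset K$. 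Since $\gamma$ lies in a Fatou component and surrounds $\partial K\subset J(f)$, that component is multiply connected. This pull-back construction is the heart of the proof and cannot be replaced by an appeal to existing results.

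Finally, your claim that the buried-\emph{component} conclusion in (c) ``follows from (a) exactly as in (b)'' conflates buried points with buried components. The argument in (b) shows only that each point of $K$ is buried; to conclude that $K$ is a buried \emph{component} of $J(f)$ you must also show that $K$ is itself a component of $J(f)$. The paper obtains this from the surrounding multiply connected Fatou components via the topological characterisation in Theorem~\ref{top}; part~(a) alone does not suffice.
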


Note that, if $ A_R(f) $ is a \sw, then $ f $ maps any component $ K $ of $ A(f)^c $ onto another such component (see Theorem \ref{components} in Section \ref{prelim}).  We call the sequence of iterates of $ K $ its \textit{orbit}, and any infinite subsequence of its iterates a \textit{suborbit}.  If $ f^p(K) = K $ for some $ p \in \N $, then we say that $ K $ is a \textit{periodic component} of $ A(f)^c $.  If $ f^m(K) \neq f^n(K) $ for all $ m > n \geq 0 $, then we say that $ K $ is a \textit{wandering component} of $ A(f)^c. $

In Section \ref{orbit}, we give a characterisation of the orbits of the components of $ A(f)^c $ when $ A_R(f) $ is a \sw.  To do this, we show how we can use a natural partition of the plane to associate with each component of $ A(f)^c $ a unique `itinerary' that captures information about its orbit.  This then enables us to prove:

\begin{theorem}
\label{uncountable}
Let $ f $ be a \tef, let $ R>0 $ be such that $ M(r, f) > r $ for $ r \geq R $, and let $ A_R(f) $ be a \sw.  Then $ A(f)^c $ has uncountably many components:
\begin{myindentpar}{0.5cm}
\begin{enumerate}[(a)]
\item whose orbits are bounded;
\item whose orbits are unbounded but contain a bounded suborbit; and
\item whose orbits escape to infinity.
\end{enumerate}
\end{myindentpar}
\end{theorem}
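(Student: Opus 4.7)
The plan is to define a symbolic coding of the components of $A(f)^c$ using the \sw\ structure of $A_R(f)$, then to exhibit uncountably many realized itineraries of each of the three orbit types.

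Let $(G_n)$ be the nested bounded \sconn\ domains witnessing the \sw\ property, and set $T_0 = \overline{G_0}$ and $T_n = \overline{G_{n+1}} \setminus G_n$ for $n \geq 1$, so the $T_n$ tile $\C$ with $\partial G_n \subset A_R(f) \subset A(f)$ separating consecutive tiles. Every component $K$ of $A(f)^c$ is compact and disjoint from $A(f)$, so it lies in a unique tile. Since $f$ maps components of $A(f)^c$ to components of $A(f)^c$ (Theorem \ref{components} of Section \ref{prelim}), we can define the itinerary $\iota(K) = (s_k)_{k \geq 0}$ by $f^k(K) \subset T_{s_k}$. The three orbit types in Theorem \ref{uncountable} correspond respectively to $(s_k)$ being bounded, tending to $\infty$, and being unbounded with a bounded subsequence.

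The central technical step is a realization lemma: for every \emph{admissible} sequence $(s_k)$ at least one component of $A(f)^c$ has that itinerary. I would prove this by a nested-preimage argument. Set
\[
W_k = T_{s_0} \cap f^{-1}(T_{s_1}) \cap \cdots \cap f^{-k}(T_{s_k}),
\]
and use compactness to extract a decreasing sequence of non-empty components $X_k \subset W_k$; their intersection $X_\infty$ is a non-empty compact subset of $\bigcap_k W_k$. That $X_\infty \subset A(f)^c$ follows from the level structure of the \sw: any $z \in A(f)$ eventually satisfies $|f^n(z)| \geq M^{n-L}(R,f)$, which outstrips the modulus bound on $T_{s_n}$ provided $(s_n)$ does not grow too fast — a condition built into admissibility. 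The covering property $M(r) \to \infty$ ensures that at each step there are at least two admissible successor tiles, so uncountably many itineraries are realizable and, after refining $X_\infty$ to one of its components, they give rise to distinct components of $A(f)^c$.

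With realization in hand, each part of Theorem \ref{uncountable} is a combinatorial construction. For (a) I fix a single tile $T_{n_0}$, exploit the branching to obtain two distinct loops of bounded length that return to $T_{n_0}$, and code arbitrary binary sequences to produce $2^{\aleph_0}$ bounded itineraries. For (c) I use the two-fold choice at each step together with the growth of $M$ to build $2^{\aleph_0}$ itineraries with $s_k \to \infty$. For (b) I interleave stretches of the (a)-pattern with sparse excursions as in (c), where binary freedom in the interleaving pattern yields $2^{\aleph_0}$ distinct itineraries whose orbits are unbounded but contain a bounded suborbit. The main obstacle is the realization lemma: showing the nested pullback is non-empty and that its points avoid $A(f)$. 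This requires careful book-keeping with the \sw\ level structure from Section \ref{prelim}, but once it is established the combinatorial constructions, and the injectivity of $\iota$ on components needed to convert ``uncountably many itineraries'' into ``uncountably many components'', are straightforward.
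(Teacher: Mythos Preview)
Your overall architecture --- tile the plane by annular regions bounded by loops in $A(f)$, code orbits symbolically, realise itineraries by a nested-preimage/compactness argument, then build uncountably many itineraries of each type --- is exactly the scheme the paper uses. But there is a genuine gap in your setup that would stop the realisation lemma from going through.

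You take $(G_n)$ to be \emph{any} sequence of domains witnessing the spider's web property, and build tiles $T_n$ from consecutive $G_n$. For the nested-preimage argument you need a covering relation of the form $f(\overline{T_n}) \supset \overline{T_{n'}}$ for suitable $n'$, and in particular, to get bounded orbits and branching, you need $f(\overline{T_n})$ sometimes to cover tiles with \emph{smaller} index. Nothing in the bare spider's web definition gives this: an arbitrary $(G_n)$ carries no information about how $f$ moves one $G_n$ relative to another, and the appeal to ``$M(r) \to \infty$'' does not supply it. The paper avoids this by using the specific sequence of \emph{fundamental holes} $(H_n)$, which satisfy $f(H_m) = H_{m+1}$ and $f(L_m)=L_{m+1}$ (Lemma~\ref{props}(c)). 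With tiles $B_m = H_m \setminus H_{m-1}$ one then proves the precise dichotomy (Lemma~\ref{coverB}): either $f(\overline{B}_m) = \overline{B}_{m+1}$ or $f(\overline{B}_m) = \overline{H}_{m+1}$, the second alternative occurring for infinitely many $m$. This is the exact content of your ``at least two admissible successor tiles'' claim, and it is what makes both the realisation lemma and the return-to-a-fixed-tile construction in part~(a) work. So the fix is simple but essential: replace your generic $(G_n)$ by the fundamental holes, and replace the hand-wave about $M(r)\to\infty$ by the covering dichotomy.

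One further point for part~(c): an itinerary with $s_k\to\infty$ does not automatically land in $A(f)^c$, since a point of $A(f)$ also has $s_k\to\infty$. You flag this (``provided $(s_n)$ does not grow too fast''), but your construction for~(c) does not enforce it. The paper handles this by forcing the orbit to linger long enough in each $\overline{B}_{m(j_i)}$ that $|f^{2i-I}(z)| < M^i(R)$, which explicitly contradicts membership in $A(f)$; some quantitative device of this kind is needed, not just $s_k\to\infty$.
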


The set of buried points of $ J(f) $ is called the \textit{residual Julia set} (see \cite{DF} for the properties of this set).   Since there are only countably many Fatou components, we have the following corollary of Theorem \ref{buried}(b) and Theorem \ref{uncountable}.

\begin{theorem}
\label{residual}
Let $ f $ be a \tef, let $ R>0 $ be such that $ M(r, f) > r $ for $ r \geq R $, and let $ A_R(f) $ be a \sw. Then the residual Julia set of $ f $ is not empty.
\end{theorem}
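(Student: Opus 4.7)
The plan is to combine the uncountability result from Theorem \ref{uncountable} with the standard fact that there are only countably many Fatou components, and to use Theorem \ref{buried} to translate ``empty interior'' into ``consists of buried points of $J(f)$.''

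First I would note that Theorem \ref{uncountable} supplies uncountably many components of $A(f)^c$ (even just part (a) alone suffices). The goal is then to show that all but countably many of these have empty interior, for then Theorem \ref{buried}(b) immediately forces each such component to consist of buried points of $J(f)$, which is exactly what is needed to conclude that the residual Julia set is non-empty.

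To establish the countability claim, I would argue as follows. Let $K$ be a component of $A(f)^c$ with non-empty interior, and pick $z \in \mathrm{int}\, K$. By Theorem \ref{buried}(a), $\mathrm{int}\, K \subset F(f)$, so $z$ lies in some Fatou component $U$. In particular $K \cap \overline{U} \neq \emptyset$, so a second application of Theorem \ref{buried}(a) yields $\overline{U} \subset K$. Thus every component of $A(f)^c$ with non-empty interior contains at least one Fatou component. Since distinct components of $A(f)^c$ are pairwise disjoint, they contain disjoint Fatou components, so the assignment $K \mapsto U$ injects these components into the countable collection of Fatou components. Hence only countably many components of $A(f)^c$ have non-empty interior.

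Combining this with Theorem \ref{uncountable} leaves uncountably many components $K$ of $A(f)^c$ with empty interior, each of which by Theorem \ref{buried}(b) consists of buried points of $J(f)$; so the residual Julia set is non-empty (and in fact uncountable). There is no real obstacle here: the result is a direct corollary once the earlier theorems are in hand. The only minor point that needs care is the step ``non-empty interior implies containment of a full Fatou component,'' and this is essentially a restatement of Theorem \ref{buried}(a).
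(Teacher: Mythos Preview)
Your proposal is correct and follows exactly the approach the paper indicates: the paper presents Theorem~\ref{residual} as a corollary of Theorem~\ref{buried}(b) and Theorem~\ref{uncountable}, invoking the fact that there are only countably many Fatou components. Your write-up simply spells out the one-line observation that a component of $A(f)^c$ with non-empty interior must contain a Fatou component (via Theorem~\ref{buried}(a)), which is precisely the bridge needed to pass from ``countably many Fatou components'' to ``countably many components of $A(f)^c$ with non-empty interior.''
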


In Section \ref{bounded}, we restrict our attention to those components of $ A(f)^c $ whose orbits are bounded. The proof of our main result, Theorem \ref{poly}, uses a technique similar to that adopted by Kisaka in \cite{K3} and by Zheng in \cite{Z1}; we describe Kisaka's and Zheng's results in Section \ref{bounded}.   

\begin{theorem}
\label{poly}
Let $ f $ be a \tef, let $ R>0 $ be such that $ M(r, f) > r $ for $ r \geq R $, and let $ A_R(f) $ be a \sw. Let $ K $ be a component of $ A(f)^c $ whose orbit is bounded. Then there exists a polynomial $ g $ of degree at least $ 2 $ such that each component of $ A(f)^c $ in the orbit of $ K $ is quasiconformally homeomorphic to a component of the filled Julia set of $ g $.
\end{theorem}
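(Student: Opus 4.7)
The plan is to adapt the polynomial-like mapping construction used by Kisaka~\cite{K3} and Zheng~\cite{Z1} in the multiply connected Fatou component setting: the nested iterates of a multiply connected Fatou component are replaced by the bounded simply connected domains $G_n$ from the spider's web, whose boundaries lie in $A_R(f)\subset A(f)$, and the Douady--Hubbard Straightening Theorem then produces the polynomial $g$.

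First I would exploit boundedness of $\mathcal O := \bigcup_{j\geq 0} f^j(K)$ together with $\bigcup_n G_n=\C$ to pick $V := G_n$ containing $\mathcal O$ in its interior. Using the level structure $A_R^L(f)$ recalled in Section~\ref{prelim}, I would further arrange, by taking $n$ large, that $\partial V\subset A_R^L(f)$ with $L$ so large that $|f(z)|\geq M^{L+1}(R)$ for every $z\in\partial V$ while $V\subset\{w\in\C : |w|<M^{L+1}(R)\}$. This forces $f(\partial V)\cap V=\emptyset$, so no point of $\partial V$ has a preimage in $\overline V$.

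Next, let $W$ be the component of $f^{-1}(V)$ containing $K$; by the previous paragraph $\overline W\subset V$. Since $V$ is bounded and simply connected and $f$ restricts to a proper holomorphic map on $W$, Riemann--Hurwitz yields $W$ simply connected and $f:W\to V$ of finite degree $d$. Possibly after replacing $f$ by an iterate $f^N$ (to ensure the full orbit of $K$ of interest lies in a single preimage component), this gives a polynomial-like mapping $f^N:W\to V$ in the sense of Douady--Hubbard. The degree satisfies $d\geq 2$: a degree-one polynomial-like map is a conformal contraction with a unique attracting fixed point, whose entire Fatou basin would lie in $A(f)^c$ and therefore would have to coincide with $K$, contradicting the nontrivial orbit dynamics of $K$ (and incompatible with $K\subset J(f)$ in the case $K$ has empty interior, by Theorem~\ref{buried}(a)).

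Applying the Straightening Theorem then produces a polynomial $g$ of degree $d\geq 2$ and a quasiconformal homeomorphism $\phi$, defined in a neighbourhood of $K_P := \bigcap_{k\geq 0} f^{-kN}(W)$, that conjugates $f^N$ to $g$ and sends $K_P$ onto the filled Julia set $K(g)$. Because $\mathcal O\subset K_P\subset A(f)^c$ and each $f^j(K)$ is a connected component of $A(f)^c$ lying inside $K_P$, it is automatically a connected component of $K_P$; so $\phi$ maps it quasiconformally onto a component of $K(g)$, which is the required conclusion. The main obstacle is the geometric step of choosing $V$, $N$, and $W$ so that $f^N:W\to V$ is a genuine polynomial-like mapping with $\overline W\subset V$ that captures the orbit of $K$; this is exactly where the precise level structure of the spider's web supplied by Rippon--Stallard has to be leveraged, playing the role that iterates of a multiply connected Fatou component play in Kisaka's and Zheng's arguments. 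Once that geometric input is in place, the degree lower bound and the straightening are essentially routine.
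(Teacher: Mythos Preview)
Your strategy---build a polynomial-like map containing the bounded orbit and apply the Straightening Theorem---is exactly the paper's. The execution, however, has real gaps.

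You set $V=G_n$ from the generic spider's web definition, but those domains satisfy only $\partial G_n\subset A_R(f)=A_R^0(f)$; taking $n$ large does not place $\partial G_n$ in $A_R^L(f)$ for large $L$, so the inequality $|f(z)|\ge M^{L+1}(R)$ on $\partial V$ that you want is not available. More seriously, with $W$ defined as the component of $f^{-1}(V)$ containing $K$, nothing ensures that $f(K),f^2(K),\dots$ lie in that \emph{same} component, so the orbit of $K$ need not sit inside the filled Julia set of your polynomial-like map; the remark about passing to an iterate $f^N$ does not explain how this would help. The paper resolves both issues by working with the \emph{fundamental holes} $H_m$ of Lemma~\ref{props}, which carry the exact relations $f^n(H_m)=H_{m+n}$ and $L_m\subset A_R^m(f)$: one chooses $m$ with $\mathcal O\subset H_m$ and with $f:H_m\to H_{m+1}$ proper of degree at least $2$ (possible because $f$ is transcendental and $H_m\supset\{|z|<M^m(R)\}$), then $N$ with $L_m,\,L_{m+N},\,L_{m+2N}$ pairwise disjoint, then a smooth Jordan curve $\gamma\subset H_{m+2N}$ surrounding $\overline H_{m+N}$ and avoiding the critical values of $f^N$; with $V$ the inside of $\gamma$ and $U$ the component of $f^{-N}(V)$ containing $H_m$, one obtains $U\subset H_{m+N}$, hence $\overline U\subset V$ with smooth boundary, and crucially $\mathcal O\subset H_m\subset U$, so the entire orbit lies in $K(f^N;U,V)$. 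Your degree-$\ge 2$ argument is also incorrect as stated: if $f^N:W\to V$ were univalent with $\overline W\subset V$, it is the inverse branch $V\to W$ that contracts, so the fixed point is \emph{repelling} for $f^N$ and lies in $J(f)$---there is no attracting Fatou basin from which to derive a contradiction.
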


The existence of the quasiconformal mapping in Theorem \ref{poly} enables us to use recent results from polynomial dynamics \cite{QY, RY1, RY2} to say more about the nature of the components of $ A(f)^c $ whose orbits are bounded:

\begin{theorem}
\label{coroll}
Let $ f $ be a \tef, let $ R>0 $ be such that $ M(r, f) > r $ for $ r \geq R $, and let $ A_R(f) $ be a \sw.  
\begin{enumerate}[(a)]
\item Let $ K $ be a component of $ A(f)^c $ with bounded orbit. Then:
\begin{enumerate}[(i)]
\item $ K $ is a singleton if and only if its orbit includes no periodic component of $ A(f)^c $ containing a critical point.  In particular, if $ K $ is a wandering component of $ A(f)^c $, then $ K $ is a singleton.
\item If the interior of $ K $ is non-empty, then this interior consists of non-wandering Fatou components.  If these Fatou components are not Siegel discs, then they are Jordan domains.
\end{enumerate}
\item All but countably many of the components of $ A(f)^c $ with bounded orbits are singletons.
\end{enumerate}  
\end{theorem}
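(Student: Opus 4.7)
The plan is to transport each statement to polynomial dynamics via the quasiconformal homeomorphism furnished by Theorem \ref{poly}, which identifies the orbit of a bounded-orbit component $K$ of $A(f)^c$ with an orbit of components in the filled Julia set $K(g)$ of a polynomial $g$ of degree at least $2$. The preliminary step is to verify that the qc map arising from the proof of Theorem \ref{poly} is in fact a topological conjugacy on the relevant invariant set, and that this conjugacy carries critical points of $f$ in the orbit of $K$ precisely to critical points of $g$ in the corresponding orbit in $K(g)$, with matching local degrees. Once this is in hand, the question ``does the orbit contain a periodic component carrying a critical point?'' is preserved under the correspondence.

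Part (a)(i) then follows from the resolution of the Branner-Hubbard conjecture for polynomials by Qiu and Yin \cite{QY}: a component of $K(g)$ is a singleton if and only if its orbit contains no periodic component carrying a critical point of $g$. The ``in particular'' clause for wandering components is immediate, since a wandering orbit contains no periodic component at all. For part (a)(ii), by Theorem \ref{buried}(a) the interior of $K$ lies in $F(f)$, and any Fatou component $U \subset K$ has bounded orbit. Under the qc conjugacy, $U$ corresponds to a bounded Fatou component of $g$, and Sullivan's no-wandering-domains theorem for polynomials forces it to be non-wandering; pulling back, $U$ is non-wandering. If such a periodic $U$ is not a Siegel disc, then the ambient component of $K(g)$ is a Jordan domain by the results of Roesch and Yin \cite{RY1, RY2}, and this topological property transfers back under the qc homeomorphism.

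For part (b), by part (a)(i) every non-singleton bounded-orbit component of $A(f)^c$ has its forward orbit enter the cycle of some periodic component of $A(f)^c$ containing a critical point of $f$. Since $f$ has only countably many critical points, there are at most countably many such periodic cycles, say realised by components $\{P_j\}_{j \in \N}$. For each $j$, apply Theorem \ref{poly} with $K = P_j$ to obtain a polynomial $g_j$; via the conjugacy (extended to the grand orbit), non-singleton bounded-orbit components of $A(f)^c$ eventually mapping onto the cycle of $P_j$ correspond to non-singleton components of $K(g_j)$. The latter form a countable set, because in polynomial dynamics every non-singleton component of the filled Julia set is an iterated preimage of one of the finitely many critical-containing periodic components, and $g_j$ has finite degree. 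A countable union over $j$ then yields countability.

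The main obstacle, in my view, is not the appeal to the polynomial results, which are essentially off-the-shelf, but the bookkeeping around Theorem \ref{poly}: namely, ensuring that the qc homeomorphism it provides is a genuine conjugacy that respects critical points with multiplicity, and that it extends in a controlled way to the grand orbit needed for the countability argument in~(b). Any loss of critical data under the qc map would break the match with \cite{QY}, and any failure of functoriality under iteration would undermine the enumeration in~(b); both should, however, be inherent in a careful reading of the construction behind Theorem \ref{poly}.
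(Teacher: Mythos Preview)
Your approach to part~(a) matches the paper's: both transport the problem to polynomial dynamics via the Straightening Theorem and then invoke Qiu--Yin \cite{QY} for~(i) and Roesch--Yin \cite{RY1,RY2} (together with Sullivan's non-wandering theorem) for~(ii). Your worry about the quasiconformal map being a genuine conjugacy that carries critical points to critical points is legitimate to flag, but it dissolves on inspection: the Straightening Theorem gives $\phi\circ f^N = g\circ\phi$ on $\overline{U}$, which is a bona fide topological conjugacy, and a holomorphic map has a critical point exactly where it fails to be locally injective, a property preserved under conjugation by any homeomorphism. The paper compresses this into the phrase ``periodic orbits and critical points are preserved by the homeomorphism''. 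One small wrinkle worth noting is that the conjugacy is with $f^N$, not $f$; but a critical point of $f$ is automatically a critical point of $f^N$, and conversely a critical point of $f^N$ in a periodic component under $f$ forces some component of that $f$-cycle to contain a critical point of $f$, which is precisely what the statement of~(a)(i) requires.

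For part~(b) you take a more elaborate route than the paper, and the obstacle you identify --- extending the conjugacy to the full grand orbit of each critical periodic cycle --- is real for your route and not obviously surmountable: a component $K$ with $f^n(K)=P_j$ may lie far outside the domain $U$ of the polynomial-like map built around $P_j$, so the conjugacy from Theorem~\ref{poly} applied with $K=P_j$ need not see $K$ at all. The paper sidesteps this entirely by arguing directly, without re-invoking the polynomial correspondence: by~(a)(i), every non-singleton bounded-orbit component $K$ satisfies $f^n(K)=P$ for some $n\ge 0$ and some periodic component $P$ containing a critical point of $f$; there are only countably many such $P$ since $f$ has countably many critical points; and for fixed $P$ and $n$, the components $K$ of $A(f)^c$ with $f^n(K)=P$ are exactly the components of $f^{-n}(P)$, each of which contains at least one point of the countable set $f^{-n}(p)$ for any chosen $p\in P$, hence there are only countably many of them. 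Summing over the countably many pairs $(P,n)$ gives the result. This direct count avoids any need to globalise the Straightening conjugacy, and is what the paper's one-line proof is gesturing at.
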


Note that, by Theorem \ref{coroll}(a)(i), if all of the critical points of $ f $ have unbounded orbits (for example, if they all lie in $ I(f) $), then every component of $ A(f)^c $ with bounded orbit is a singleton.

Evidently, periodic components of $ A(f)^c $ have bounded orbits, so Theorems \ref{poly} and \ref{coroll} apply to them in particular.  Our final section, Section \ref{empty}, gives a further result for periodic components of $ A(f)^c $.   

Dom\'{i}nguez \cite{PD1} has shown that, if $ f $ is a \tef\ with a \mconn\ Fatou component, then $ J(f) $ has buried singleton components, and such components are dense in $ J(f) $ (see also \cite{DF}). Bergweiler \cite{wB00} has given an alternative proof of this result, using a method involving the construction of a singleton component of $ J(f) $ which is also a repelling periodic point of $ f $.  

By using a method similar to Bergweiler's, together with earlier results from this paper, we are able to prove the following.

\begin{theorem}
\label{singleton}
Let $ f $ be a \tef, let $ R>0 $ be such that $ M(r, f) > r $ for $ r \geq R $, and let $ A_R(f) $ be a \sw.  Then $ A(f)^c $ has singleton periodic components, and such components are dense in $ J(f). $  If $ f $ has a \mconn\ Fatou component, then these singleton periodic components of $ A(f)^c $ are buried components of $ J(f). $   
\end{theorem}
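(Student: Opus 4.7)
The plan is to follow Bergweiler's strategy in \cite{wB00}, adapted to the present setting: for any $z_0 \in J(f)$ and any $\varepsilon > 0$, I will construct a repelling periodic point $w$ of $f$ in $B(z_0,\varepsilon)$ whose component in $A(f)^c$ is $\{w\}$. Density in $J(f)$ is then automatic, and the final buried-component assertion follows immediately from Theorem~\ref{buried}(c) applied to the singleton (hence empty-interior) component $\{w\}$.

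The first step is to produce a Jordan curve $\gamma \subset A(f) \cap J(f)$ bounding a Jordan domain $V$, from which a small univalent pullback near $z_0$ can be taken. Using the preliminary material of Section~\ref{prelim} and the \sw\ structure of $A_R(f)$, a natural candidate is (a subcurve of) $\partial G_n$ for a domain $G_n$ from the \sw\ definition; alternatively, a Jordan component can be extracted from the surrounding closed set in $A(f)\cap J(f)$ provided by Theorem~\ref{buried}(b). By the blowing-up property of $J(f)$, one then finds $m \in \N$ together with a univalent inverse branch $\phi$ of $f^m$ on $V$ such that $\phi(V) \subset B(z_0, \varepsilon)$ and $\overline{\phi(V)} \subset V$; blowing-up yields $f^m(B(z_0,\varepsilon))\supset\overline V$ for some $m$, and standard Montel-type arguments allow $V$ to be chosen so as to avoid the critical values of $f^m$, giving a univalent branch. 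By the Schwarz--Pick lemma, $\phi$ is a strict contraction of the hyperbolic metric of $V$, so has a unique fixed point $w \in \phi(V)$, and $w$ is a repelling periodic point of $f$ of period $m$ lying in $B(z_0,\varepsilon)$.

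The second step is to show that the component $K$ of $A(f)^c$ containing $w$ is exactly $\{w\}$. Since $A(f)$ and $J(f)$ are both completely invariant, so is $A(f)\cap J(f)$, and hence every iterated pullback $\phi^k(\gamma)$ lies in $A(f)\cap J(f)$. The curves $\phi^k(\gamma)$ bound nested Jordan domains $\phi^k(V)$, each containing $w$. Because $K$ is connected, lies in $A(f)^c$, meets $\phi^k(V)$, and is disjoint from $\partial \phi^k(V) \subset A(f)$, we deduce that $K \subset \phi^k(V)$ for every $k \in \N$. Since $\phi$ is a strict hyperbolic contraction, $\phi^k$ converges to the constant $w$ uniformly on compact subsets of $V$, so $\phi^k(V)$ shrinks to $\{w\}$ as $k \to \infty$, forcing $K = \{w\}$. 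Since $z_0$ and $\varepsilon$ were arbitrary, the singleton periodic components of $A(f)^c$ so obtained are dense in $J(f)$.

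The main obstacle is the first step: producing a Jordan curve $\gamma \subset A(f)\cap J(f)$ of the right type and arranging a univalent inverse branch of $f^m$ on $V$ whose image sits inside $B(z_0,\varepsilon)$ with closure in $V$. Choosing $\gamma$ from the \sw\ machinery and controlling the critical values of $f^m$ so that $\phi$ is univalent while $\overline{\phi(V)}\subset V$ are the delicate points; once they are in place, the rest is a short combination of the complete invariance of $A(f)\cap J(f)$, hyperbolic contraction, and Theorem~\ref{buried}(c).
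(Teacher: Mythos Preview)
Your overall architecture is the right one and matches the paper: find a repelling periodic point $w$ near an arbitrary point of $J(f)$ as the attracting fixed point of a univalent branch $\phi$ of $f^{-n}$, and then use complete invariance of $A(f)$ to see that the nested sets $\phi^k(\overline{V})$, whose boundaries lie in $A(f)$, shrink to $\{w\}$, forcing the component of $A(f)^c$ through $w$ to be $\{w\}$. The final clause via Theorem~\ref{buried}(c) is also exactly what the paper does.

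The genuine gap is your first step, and you have correctly flagged it as the obstacle but not resolved it. Obtaining a \emph{univalent} inverse branch of $f^m$ on $V$ is not a consequence of the blowing-up property, and ``standard Montel-type arguments allow $V$ to be chosen so as to avoid the critical values of $f^m$'' does not work here: for a general transcendental entire function the singular set of $f$ (and hence of $f^m$) can be dense in $\C$, so there may be no simply connected $V$ meeting $J(f)$ on which any branch of $f^{-m}$ is defined. This is precisely why Bergweiler's argument in \cite{wB00}, which the paper follows, invokes a corollary of the Ahlfors five islands theorem: given five Jordan domains $D_1,\dots,D_5$ with pairwise disjoint closures and subdomains $V_j\subset D_j$ with $V_j\cap J(f)\neq\emptyset$, one obtains $\mu$, $n$ and a domain $U\subset V_\mu$ with $f^n:U\to D_\mu$ conformal. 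The five-islands machinery is what buys univalence without any control on the postsingular set.

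A secondary point: the paper does not attempt to produce a Jordan curve $\gamma\subset A(f)\cap J(f)$ directly (neither the loops $L_n$ nor the closed surrounding sets of Theorem~\ref{buried}(b) are Jordan curves in general). Instead it first uses Theorems~\ref{uncountable}(a) and~\ref{coroll}(b) to get uncountably many singleton components of $A(f)^c$, shows by backward invariance that these are dense in $J(f)$, and then, inside each $D_j$, picks such a singleton $\{z_j\}$ and applies Theorem~\ref{buried}(b) to it to obtain $V_j$ with $\partial V_j\subset A(f)$. This both supplies the domains needed for the five-islands proposition and guarantees $\partial V_\mu\subset A(f)$ for the shrinking argument. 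Your plan should be amended to use the five-islands step in place of blowing-up, and to construct the $V_j$ in this indirect way.
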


Note that, if $ f $ is a \tef\ with a \mconn\ Fatou component, then we have shown that singleton \textit{periodic} components of $ J(f) $ are dense in $ J(f) $, a slight strengthening of the results of Dom\'{i}nguez \cite{PD1} and Bergweiler \cite{wB00}.

The first part of Theorem \ref{singleton} is also a strengthening of Rippon and Stallard's result \cite[Theorem 1.6]{RS10a} that, if $ A_R(f) $ is a \sw, then every point in $ J(f) $ is the limit of a sequence of points, each of which lies in a distinct component of $ A(f)^c $.  Note that, by Theorem \ref{buried}(b), if $ A_R(f) $ is a \sw, then any singleton component of $ A(f)^c $ must be a buried point of $ J(f) $, but if $ f $ does not have a \mconn\ Fatou component, then such a component of $ A(f)^c $ is not a buried component of $ J(f) $, because $ J(f) $ is a \sw\ by \cite[Theorem 1.5]{RS10a} and so is connected.   

\textbf{Acknowledgements} I thank my doctoral supervisors, Prof P.J. Rippon and Prof G.M. Stallard, for their inspiration, and for their particular help and encouragement in the preparation of this paper.

\section[prelim]{Preliminary material}
\label{prelim}
\setcounter{equation}{0} 

We first summarise a number of basic results and definitions that are used throughout this paper.  These are taken from \cite{RS10a}, which should be consulted for full details and proofs.  

First, from the definition of $ A(f) $ and its levels, we have:
\begin{equation} \label{union}
A(f) = \bigcup_{L\in\N} A_R^{-L}(f),
\end{equation}
and
\begin{equation} \label{nested}
f(A_R^L(f)) \subset A_R^{L+1}(f) \subset A_R^L(f), \text{ for } L \in \Z.
\end{equation}
These relations easily give that $ A(f) $ is completely invariant.

Some basic properties of $ A_R(f) $ spiders' webs are given in the following:

\begin{lemma} [Lemma 7.1(a)-(c) in \cite{RS10a}] \label{basic}
Let $ f $ be a \tef, let $ R>0 $ be such that $ M(r,f)>r $ for $ r \geq R $ and let $ L \in \Z $.
\begin{enumerate}[(a)]
\item If $ G $ is a bounded component of $ A_R^L(f)^c $, then $ \partial G \subset A_R^L(f) $ and $ f^n $ is a proper map of $ G $ onto a bounded component of $ A_R^{n+L}(f)^c $, for each $ n \in \N $.
\item If $ A_R^L(f)^c $ has a bounded component, then $ A_R^L(f) $ is a \sw\ and hence every component of $ A_R^L(f)^c $ is bounded.
\item $ A_R(f) $ is a \sw\ if and only if $ A_R^L(f) $ is a \sw.
\end{enumerate}
\end{lemma}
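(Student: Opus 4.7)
The lemma has three parts that I would tackle in order, with the analytical substance concentrated in (b).

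For (a), the first claim is essentially bookkeeping: $A_R^L(f)$ is closed as an intersection of the closed conditions $\{z:|f^n(z)|\ge M^{n+L}(R,f)\}$, so its complement is open and $\partial G\subset A_R^L(f)$ for any complementary component $G$. For the mapping statement I would iterate the inclusion (\ref{nested}) to get $f^n(\partial G)\subset A_R^{n+L}(f)$, then combine boundedness of $G$, continuity of $f^n$, and the open mapping theorem to see that $f^n(G)$ is open, bounded, connected, with $\partial(f^n(G))\subset f^n(\partial G)\subset A_R^{n+L}(f)$. Thus $f^n(G)$ sits inside a bounded component $G'$ of $A_R^{n+L}(f)^c$, and a clopen argument (open in $G'$ by open mapping; relatively closed because any relative-boundary point would lie in $\partial(f^n(G))\cap G'\subset A_R^{n+L}(f)\cap G'=\emptyset$) forces $f^n(G)=G'$. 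Properness then follows from the standard fact that a holomorphic map between bounded open sets whose boundary maps into the boundary is proper.

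Part (b) is the heart of the lemma. My plan is to upgrade the single bounded complementary component $G$ to a nested exhausting sequence of bounded \sconn\ domains $H_n$ with $\partial H_n\subset A_R^L(f)$. The natural starting material is supplied by (a): the images $G_n=f^n(G)$ are bounded components of $A_R^{n+L}(f)^c$, and their topological hulls $H_n$ (filling in the bounded complementary components of $G_n$) are bounded and \sconn, with $\partial H_n\subset\partial G_n\subset A_R^{n+L}(f)\subset A_R^L(f)$ by (\ref{nested}). The exhaustion $\bigcup H_n=\C$ should come from $|f^n(z)|\ge M^{n+L}(R)\to\infty$ on $\partial G$, since by the properness in (a) one has $\partial G_n=f^n(\partial G)$, which escapes every fixed disk and forces $H_n$ to enclose any prescribed compact set. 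The hard part will be the honest nesting $H_n\subset H_{n+1}$, which is not automatic from $G_{n+1}=f(G_n)$; I expect to handle this by replacing $H_n$ with the hull of $\bigcup_{k\le n}H_k$, or by extracting a subsequence and invoking connectedness of $A_R^L(f)$ (itself a consequence of the absence of unbounded complementary components, established once one bounded component is in hand and the growth of $|f^n|$ is used to fence it in) to link successive hulls while keeping the boundary condition.

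For (c), the plan is to reduce via (b) to the single statement that $A_R(f)^c$ has a bounded component if and only if $A_R^L(f)^c$ does. For $L\ge0$ one has $A_R^L(f)\subset A_R(f)$: the forward direction is supplied by (a) applied with $n=L$, and the backward direction from the inclusion $A_R^L(f)^c\supset A_R(f)^c$, since every component of the smaller complement then lies in a component of the larger one, forcing boundedness of the former once the latter are bounded. For $L\le0$ the inclusions reverse and the same two arguments run with the levels $0$ and $L$ swapped.
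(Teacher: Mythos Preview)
The paper does not prove this lemma: it is quoted as Lemma~7.1(a)--(c) of \cite{RS10a}, and the surrounding text explicitly says that \cite{RS10a} ``should be consulted for full details and proofs.'' There is therefore no in-paper argument against which to compare your proposal.

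On the proposal itself, two remarks. In part~(a), the assertion that ``$f^n(G)$ sits inside a bounded component $G'$ of $A_R^{n+L}(f)^c$'' does not follow merely from $\partial(f^n(G))\subset A_R^{n+L}(f)$: a bounded open set with boundary in a closed set can still meet that set. Your clopen argument then only yields $G'\subset f^n(G)$, not the equality you claim. What is actually needed is $f(G)\cap A_R^{L+1}(f)=\emptyset$; this is true, but it requires going back to the definition of the levels and, in the case where $G$ is the component containing $0$, the maximum-modulus estimate $|z|<M^L(R)\Rightarrow |f(z)|<M^{L+1}(R)$. In part~(b), your identification of the nesting $H_n\subset H_{n+1}$ as the non-trivial step is right, but your parenthetical appeal to ``connectedness of $A_R^L(f)$'' as an input is circular as written, since connectedness is part of the spider's web conclusion you are trying to establish; you would need to derive it independently (for instance from the unboundedness of every component of $A_R^L(f)$, another result of \cite{RS10a}) before invoking it.
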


Next, we give some notation and terminology.  In this paper, if $ S $ is a subset of $ \C $, we use the notation $ \widetilde{S} $ to denote the union of $ S $ and all its bounded complementary components (if any).  As in \cite{RS10a}, we say that $ S $ \textit{surrounds} a set or a point if that set or point lies in a bounded complementary component of $ S $.  If $ S $ is a bounded domain and $ f $ is an entire function, then we have:  
\begin{equation} \label{tilde}
f( \widetilde{S} ) \subset \widetilde{f(S)},
\end{equation} 
since if $ \gamma $ is any Jordan curve in $ S $, then the image under $ f $ of the inside of $ \gamma $ lies inside $ f(\gamma) $, and so in $ \widetilde{f(S)} $.

We also recall the following definition:
\begin{definition*}[Definition 7.1 in \cite{RS10a}]
Let $ f $ be a \tef\ and let $ R>0 $ be such that $ M(r,f)>r $ for $ r \geq R $.  If $ A_R(f) $ is a \sw\ then, for each $ n \geq 0 $, let:
\begin{itemize}
\item $ H_n $ denote the component of $ A_R^n(f)^c $ containing $ 0 $, and
\item $ L_n $ denote its boundary, $ \partial H_n $.
\end{itemize}
We say that $ (H_n)_{n \geq 0} $ is the \textit{sequence of fundamental holes} for $ A_R(f) $ and $ (L_n)_{n \geq 0} $ is the \textit{sequence of fundamental loops} for $ A_R(f) $.
\end{definition*}  
Note that $ L_n $ may have bounded complementary components other than $ H_n $.  

The following lemma gives some properties of these sequences.

\begin{lemma} [Lemma 7.2 in \cite{RS10a}] \label{props}
Let $ f $ be a \tef\ and let $ R>0 $ be such that $ M(r,f)>r $ for $ r \geq R $.  Suppose that $ A_R(f) $ is a \sw, and that $ (H_n)_{n \geq 0} $ and $ (L_n)_{n \geq 0} $ are respectively the sequences of fundamental holes and loops for $ A_R(f) $.  Then:
\begin{enumerate}[(a)]
\item $ H_n \supset \lbrace z : \vert z \vert < M^n(R) \rbrace $ and $ L_n \subset A_R^n(f) $, for $ n \geq 0 $;  
\item $ H_{n+1} \supset H_n, $ for $ n \geq 0 $;
\item for $ n \in \N $ and $ m \geq 0 $,
\[ f^n(H_m) = H_{m+n} \text{  and  } f^n(L_m) = L_{m+n}; \]
\item there exists $ N \in \N $ such that, for $ n \geq N $ and $ m \geq 0 $,
\[ L_{n+m} \cap L_m = \emptyset; \]
\item if $ L \in \Z $ and $ G $ is a component of $ A_R^L(f)^c $, then:
\[ f^n(G) = H_{n+L} \text{  and  } f^n(\partial G) = L_{n+L}, \]
for $ n $ sufficiently large;
\item if there are no multiply connected Fatou components, then $ L_n \subset J(f) $ for $ n \geq 0. $
\end{enumerate}
\end{lemma}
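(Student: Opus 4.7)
The plan is to prove the six assertions in order, each building on those before it, using only the definitions of $A_R^n(f)$ and $H_n$, the nesting (\ref{nested}), Lemma~\ref{basic}, and the maximum modulus principle.

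Parts (a) and (b) unpack definitions. Setting $n=0$ in the defining inequality of $A_R^n(f)$ shows that $z \in A_R^n(f)$ forces $|z| \geq M^n(R)$, so the open disk $\{|z| < M^n(R)\}$ lies in $A_R^n(f)^c$; being connected and containing the origin, it is contained in $H_n$. The inclusion $L_n \subset A_R^n(f)$ is an immediate application of Lemma~\ref{basic}(a) to the bounded component $H_n$. For (b), the nesting in (\ref{nested}) gives $A_R^n(f)^c \subset A_R^{n+1}(f)^c$, so $H_n$ lies in a single component of $A_R^{n+1}(f)^c$, which must be $H_{n+1}$ since both contain $0$.

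Part (c) is the crux. By induction on $n$ it suffices to show $f(H_m) = H_{m+1}$ for every $m$. Lemma~\ref{basic}(a) already provides that $f(H_m)$ is \emph{some} bounded component of $A_R^{m+1}(f)^c$; the task is to identify it. Since distinct components are disjoint, it is enough to exhibit a point of $f(H_m) \cap H_{m+1}$. This is where (a) pays off: the disk $\{|z| < M^m(R)\}$ lies in $H_m$, and by the maximum modulus principle on the closed disk of radius $M^m(R)$, its image under $f$ lies in $\{|w| < M^{m+1}(R)\} \subset H_{m+1}$, producing the required intersection. The loop statement then follows because $f^n \colon H_m \to H_{m+n}$ is proper, hence maps boundary onto boundary: $f^n(L_m) = \partial H_{m+n} = L_{m+n}$. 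I expect this identification of the image component to be the main obstacle; the remaining parts propagate its consequences.

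For (d), the idea is that as the index grows, the fundamental hole swallows any previously fixed one. Since $\overline{H_0}$ is compact, there exists $N$ with $\overline{H_0} \subset \{|z| < M^N(R)\}$, so (a) and (b) give $\overline{H_0} \subset H_n$ for all $n \geq N$. Applying $f^m$ and using (c), together with the fact that continuous images of compact sets are closed, yields $\overline{H_m} = f^m(\overline{H_0}) \subset H_{n+m}$, whence $L_m \cap L_{n+m} = \emptyset$. For (e), any $z \in G$ satisfies $z \notin A_R^L(f)$, so some admissible $n_0$ gives $|f^{n_0}(z)| < M^{n_0+L}(R)$, placing $f^{n_0}(z)$ in $H_{n_0+L}$ by (a); the bounded component $f^{n_0}(G)$ from Lemma~\ref{basic}(a) therefore meets, and hence equals, $H_{n_0+L}$, and (c) propagates this to all $n \geq n_0$. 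Finally, (f) uses the fact that any Fatou component contained in $A(f)$ must be a Baker wandering domain, hence multiply connected; under the stated hypothesis there are none, so $A(f) \subset J(f)$, and combining (a) with (\ref{union}) gives $L_n \subset A_R^n(f) \subset A(f) \subset J(f)$.
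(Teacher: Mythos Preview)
The paper does not prove this lemma; it is quoted verbatim from \cite{RS10a}, so there is no in-paper argument to compare against. Your proofs of (a)--(e) are correct and follow the natural route. One small wrinkle in (a): under the convention $\N=\{1,2,\dots\}$ used here (note the separate appearance of ``$m\ge 0$'' versus ``$n\in\N$'' in the statement of the lemma itself), the index value $0$ is not literally part of the defining inequality for $A_R^L(f)$, so ``setting $n=0$'' is not available. The fix is immediate: use the $n=1$ inequality together with strict monotonicity of $r\mapsto M(r)$ to conclude $|f(z)|\le M(|z|)<M^{L+1}(R)$ whenever $|z|<M^L(R)$, which places the open disk in $A_R^L(f)^c$ as required.

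Part (f), however, has a real gap. You assert as a ``fact'' that any Fatou component contained in $A(f)$ must be a Baker wandering domain and hence multiply connected, and then take the contrapositive to obtain $F(f)\cap A(f)=\emptyset$. That assertion is correct, but it is a theorem in its own right (established in \cite{RS10a}, building on \cite{BH99} and \cite{RS05}), not a consequence of the definitions or of the earlier parts of the lemma. The argument behind it --- roughly, that fast escape forces large iterates $f^n(U)$ to contain curves winding around the origin, so that some $U_n$ is multiply connected --- requires genuine growth estimates for $M(r,f)$. Since the rest of your proof works from first principles using only Lemma~\ref{basic} and (\ref{nested}), you should either supply this argument or cite the result explicitly; as written, the key implication ``no multiply connected Fatou components $\Rightarrow F(f)\cap A(f)=\emptyset$'' is unsupported.
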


We also include in this section a number of other results which will be used in proving the theorems stated in Section \ref{intro}.  

First, we will need the following characterisation of \mconn\ Fatou components for a \tef, due to Baker \cite{iB84}.
\begin{lemma} \label{baker}
Let $ f $ be a \tef\ and let $ U $ be a \mconn\ Fatou component.  Then:
\begin{itemize}
\item $ f^k(U) $ is bounded for any $ k \in \N $,
\item $ f^{k+1} (U) $ surrounds $ f^k(U) $ for large $ k $, and
\item $ f^k(U) \to \infty $ as $ k \to \infty. $
\end{itemize}
\end{lemma}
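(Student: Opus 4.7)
The plan is to exploit the multiple connectedness of $U$ through a non-contractible Jordan curve, combining normality on the Fatou component with the maximum modulus principle. I would begin by fixing a Jordan curve $\gamma \subset U$ whose bounded complementary component $V$ meets $\C \setminus U$, and picking a point $a \in V \setminus U$ (which lies in $J(f)$ because $\partial U \subset J(f)$).

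First I would show that $f^n \to \infty$ locally uniformly on $U$. By normality on the Fatou component $U$, any subsequence $f^{n_k}$ has a further subsequence converging either to $\infty$ or to a holomorphic function. Assuming, for contradiction, that a subsequence converges to a holomorphic $\phi$, the family $\{f^{n_k}\}$ is uniformly bounded on the compact curve $\gamma$, and the maximum modulus principle applied to each $f^{n_k}$ on $V$ forces uniform boundedness on $V$. Montel's theorem then gives normality on $V$, so $V \subset F(f)$; because $V$ is connected and meets $U$ in an inside collar of $\gamma$, this would yield $V \subset U$, contradicting $a \in V \setminus U$.

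Next I would derive the boundedness and surround properties together. With $f^n \to \infty$ on $U$, for all sufficiently large $n$ we have $\min_\gamma |f^n| > \max_\gamma |z|$, so $f^n(\gamma)$ is disjoint from $\gamma$ and contained in $\{|w| > \max_\gamma |z|\}$. A winding-number argument then shows that $f^n(\gamma)$ surrounds $\gamma$ and that $f^n(V)$ contains the Jordan domain enclosed by $f^n(\gamma)$. Since $f^n(V)$ is bounded by the maximum modulus principle, so is the inside of $f^n(\gamma)$, forcing $U$ itself into this bounded region. Replacing $\gamma$ by $f^k(\gamma) \subset f^k(U)$ and iterating yields, for $k$ large, that the inside of $f^{k+1}(\gamma)$ is contained in $f^{k+1}(U)$ and surrounds $f^k(U)$; induction gives boundedness of every $f^k(U)$ together with the surround property. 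Finally, since $|f^n(z_0)| \to \infty$ for any fixed $z_0 \in U$ and $f^n(z_0) \in f^n(U)$, the minimum modulus on $f^n(U)$ tends to $\infty$, so $f^k(U) \to \infty$.

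The main obstacle will be the boundedness assertion: the maximum modulus principle directly controls $f^n$ only on the inside $V$ of $\gamma$, so I would need a careful winding-number / open-mapping analysis of the curves $f^n(\gamma)$ to translate the escape behaviour on $\gamma$ into a surrounding of $U$ by $f^n(\gamma)$, and thereby confine $U$ (and hence every $f^k(U)$) to a bounded region.
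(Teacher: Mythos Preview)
The paper does not give its own proof of this lemma; it is quoted as Baker's characterisation of multiply connected Fatou components and attributed to \cite{iB84}. So there is no in-paper argument to compare against, and your outline is essentially a sketch of Baker's classical proof.

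Your argument that $f^n\to\infty$ locally uniformly on $U$ is correct and standard. However, the remaining steps have genuine gaps. First, from $\min_{\gamma}|f^n|>\max_{\gamma}|z|$ alone you cannot conclude by a winding-number argument that $f^n(\gamma)$ surrounds $\gamma$: a priori $f^n(\gamma)$ could be a loop in $\{|w|>R\}$ that does not enclose the origin. What is missing is the blow-up property of $J(f)$: since $a\in V\cap J(f)$, for large $n$ the open set $f^n(V)$ covers any given compact set (in particular $\overline{V}$), and because $f^n(V)$ is bounded with $\partial f^n(V)\subset f^n(\gamma)$, this forces $f^n(\gamma)$ to separate $\gamma$ from $\infty$. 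Only then can you trap $U$ in a bounded region, and you still need to argue (using that $U$ cannot coincide with the Fatou component containing $f^n(\gamma)$, since that would make $U$ periodic with $f^n\to\infty$ on it, leading to a contradiction) that $U$ actually lies inside the curve. Second, your final sentence, ``since $|f^n(z_0)|\to\infty$ for a fixed $z_0\in U$, the minimum modulus on $f^n(U)$ tends to $\infty$'', is a non sequitur: one escaping point in $f^n(U)$ says nothing about $\min_{w\in f^n(U)}|w|$. The conclusion $f^k(U)\to\infty$ has to be deduced from the surrounding and disjointness of the (wandering) components $U_k$, together with the uniform escape of $f^k(\gamma)$, not from the behaviour at a single point.
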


Next, we will make use of the following topological characterisation of the buried components of a closed set.  We will use the result only where the closed set is the Julia set of a \tef, but we present it in a more general form to bring out its essentially topological nature.  The result may be known, but we have been unable to locate a reference so we include a proof for completeness.  

Recall that, if $ K $ is a component of a closed set $ F $ in $ \widehat{\C} $, then:
\begin{itemize}
\item $ z \in K $ is a \textit{buried point} of $ F $ if $ z $ does not lie on the boundary of any component of $ F^c $, and 
\item $ K $ is a \textit{buried component} of $ F $ if $ K $ consists entirely of buried points of~$ F $. 
\end{itemize}  
In particular, a buried point of $ J(f) $ is a point of $ J(f) $ that does not lie on the boundary of any Fatou component, and a buried component of $ J(f) $ is a component of $ J(f) $ consisting entirely of such buried points.

\begin{theorem}
\label{top}
Let $ K $ be a component of a closed set $ F $ in $ \widehat{\C} $.  Then $ K $ is a buried component of $ F $ if and only if, for each component $ L $ of $ K^c $, and any closed subset $ B $ of $ L $, there is a component of $ F^c $ that separates $ B $ from $ K $ and whose boundary does not meet $ K. $
\end{theorem}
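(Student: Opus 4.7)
The plan is to prove the two directions separately. For the \emph{backward direction} I argue by contradiction: suppose $K$ is not buried, so there exist $z_0 \in K$ and a component $W_\ast$ of $F^c$ with $z_0 \in \partial W_\ast$. I take $L$ to be the component of $K^c$ containing $W_\ast$, pick any $b_0 \in W_\ast$, and set $B = \{b_0\}$. The hypothesis produces a component $W$ of $F^c$ with $K \cap W = B \cap W = \emptyset$ and with $K, B$ lying in distinct components $T_K, T_{b_0}$ of $\widehat{\C} \setminus W$. Since $b_0 \in W_\ast \setminus W$, $W \neq W_\ast$ and hence $W_\ast \cap W = \emptyset$; so $W_\ast$ is a connected subset of the closed set $\widehat{\C} \setminus W$ containing $b_0$, giving $W_\ast \subseteq T_{b_0}$. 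Taking closures, $z_0 \in \partial W_\ast \subseteq \overline{W_\ast} \subseteq T_{b_0}$. But $z_0 \in K \subseteq T_K$, contradicting $T_K \cap T_{b_0} = \emptyset$.

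For the \emph{forward direction} I assume $K$ is buried and let $L, B$ be given with $B$ closed in $\widehat{\C}$ (hence compact) and disjoint from $K$. First I choose an open $U \supset K$ with $\overline{U} \cap B = \emptyset$. Since $K$ is a component of the compact set $F$, the classical clopen-components characterization in compact Hausdorff spaces yields a clopen-in-$F$ set $C$ with $K \subseteq C \subseteq U$. Normality of $\widehat{\C}$, applied to the disjoint closed sets $C$ and $F \setminus C$, produces an open $V$ with $C \subseteq V \subseteq \overline{V} \subseteq U$ and $\overline{V} \cap (F \setminus C) = \emptyset$, which forces $\overline{V} \cap F = C$ and $\partial V \subseteq F^c$. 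I then take $V_0$ to be the component of $V$ containing $K$ and check that $\partial V_0 \subseteq \partial V \subseteq F^c$, using that distinct components of the open set $V$ are disjoint open sets.

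The compact set $\partial V_0$ separates $K \subseteq V_0$ from $B \subseteq \widehat{\C} \setminus \overline{V_0}$ in $\widehat{\C}$. Invoking the classical theorem of Janiszewski--Kuratowski type---a compact separator in $\widehat{\C}$ of two disjoint compacta always contains a connected subcomponent that itself separates them---I extract a connected component $M_0$ of $\partial V_0$ that separates $K$ from $B$. Being connected and contained in the open set $F^c$, $M_0$ lies in a single component $W$ of $F^c$; since $\widehat{\C} \setminus W \subseteq \widehat{\C} \setminus M_0$, $W$ also separates $K$ from $B$, and $\partial W \cap K = \emptyset$ is automatic from the buried hypothesis.

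The hardest part will be ensuring that $W \cap B = \emptyset$, as the notion of separation requires. The components of $F^c$ meeting $B$ form at most a countable collection, each with closure disjoint from $K$ by the buried hypothesis; I will refine the initial neighborhood $U$ so that it avoids the closures of all such components, using that each one individually lies at positive distance from $K$, so that the $W$ produced above cannot meet $B$.
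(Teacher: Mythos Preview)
Your converse direction is essentially the paper's own argument: both take a component $W_\ast$ of $F^c$ whose boundary meets $K$, pick a point of $W_\ast$ for $B$, and show that any separating component of $F^c$ would have to coincide with $W_\ast$, contradicting $\partial W\cap K=\emptyset$.

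For the forward direction you take a genuinely different route. The paper notes that $K$ is a component of the compact set $X=F\cup B$, invokes a theorem of Newman to produce a Jordan curve $C\subset X^c\subset F^c$ separating $K$ from the rest of $X$, lets $G$ be the component of $F^c$ containing $C$, and reads off $\partial G\cap K=\emptyset$ directly from the buried hypothesis. Your construction of $V_0$ via clopen-in-$F$ sets and normality is a legitimate alternative way to manufacture a compact separator $\partial V_0\subset F^c$, but there is a gap at your Janiszewski--Kuratowski step. The theorem that a compact separator in $\widehat{\C}$ has a \emph{component} that still separates is standard for two points (or two connected compacta), but it is false for a disconnected $B$: take $\partial V_0$ to be two concentric circles, $K$ a point in the annulus between them, and $B$ consisting of one point inside the inner circle and one outside the outer; then neither circle alone separates $K$ from all of $B$. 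Since $B$ is only assumed closed in the statement, you cannot invoke this step as written. The paper sidesteps the issue by producing a \emph{connected} separator (the Jordan curve $C$) from the outset; it is, admittedly, also a little casual at the analogous point, asserting without comment that $B$ lies in a single component $X'$ of $F\cup B$. Both arguments are easily repaired by first passing to a clopen-in-$(F\cup B)$ neighbourhood of $K$ that misses $B$, so that one is separating two genuine disjoint compacta.

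Your last paragraph worries about $W\cap B=\emptyset$, and the proposed fix does not work as stated: the closures of the (possibly infinitely many) components of $F^c$ meeting $B$ may accumulate on $K$ even though each one individually misses $K$, so there need not be a single neighbourhood $U$ of $K$ avoiding them all. Fortunately this is not needed. Once you have a connected $M_0\subset F^c$ separating $K$ from $B$, the complementary component $T_K$ of $W\supset M_0$ containing $K$ lies entirely on the $K$-side of $M_0$, hence $T_K\cap B=\emptyset$; this is exactly what the paper establishes for its $G$, and the paper does not verify $G\cap B=\emptyset$ either.
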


\begin{proof}
Let $ K $ be a buried component of $ F $, let $ L $ be a component of $ K^c $ and let $ B $ be any closed subset of $ L $.   Then $ X = B \cup F $ is closed in $ \widehat{\C} $, $ K $ is a component of $ X $ and $ B $ lies in some other component of $ X $, say $ X'.$

Then it follows from \cite[Theorem 3.3, p.143]{New} that there is a Jordan curve $ C $ separating $ K $ from $ X' $ in such a way that $ C $ lies in $ X^c \subset F^c $.  Since $ C $ is connected, it must lie in some component $ G $ of $ F^c $.  Furthermore, by \cite[Theorem 14.5, p.124]{New}, the complementary component of $ G $ containing $ K $ contains exactly one component ($ D, $ say) of $ \partial G $.  Since $ K $ is a buried component of $ F $, we therefore have $ D \cap K = \emptyset $, as required.

To prove the converse, let $ K $ be a component of $ F $.  Suppose there exists some component $ G $ of $ F^c $ and some $ z \in K $ such that $ z \in \partial G $.  Let $ L $ be the component of $ K^c $ containing $ G $, and let $ B $ be a closed subset of $ G $.  Now suppose that there is a component $ G' $ of $ F^c $ separating $ B $ from $ K $ (and hence $ B $ from $ z $), whose boundary does not meet $ K $.  Then since $ B \subset G $ and $ z \in \partial G $, $ G' $ must meet $ G $.  But $ G $ is a component of $ F^c $, so this means that $ G' = G $, which is a contradiction because $ \partial G \cap K \neq \emptyset. $
\end{proof}

Finally, we will need the following result on mappings of the components of $ A(f)^c $.

\begin{theorem} \label{components}
Let $ f $ be a \tef\ and let $ R>0 $ be such that $ M(r,f)>r $ for $ r \geq R $. If $ A_R(f) $ is a \sw, and $ K $ is a component of $ A(f)^c $, then $ f(K) $ is also a component of $ A(f)^c.$
\end{theorem}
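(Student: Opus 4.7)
The plan is to observe that, by complete invariance of $A(f)$ and continuity of $f$, $f(K)$ is a connected subset of $A(f)^c$, hence lies in a unique component $K'$ of $A(f)^c$; our task is to show $f(K)=K'$. For each $L\ge 0$, let $G_L$ and $G'_L$ denote the components of $A_R^{-L}(f)^c$ containing $K$ and $K'$ respectively. By Lemma \ref{basic}(c) together with (b), these components are bounded, and they form decreasing sequences as $L$ increases. Applying Lemma \ref{basic}(a) with level $-L-1$ and $n=1$ shows that $f$ is a proper map of $G_{L+1}$ onto some bounded component of $A_R^{-L}(f)^c$; this component must be $G'_L$, since $f(G_{L+1})$ is connected and contains $f(K)\subset K'\subset G'_L$.

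Given any $w\in K'$, I would then produce a preimage of $w$ lying in $\bigcap_L G_{L+1}$. Since $w\in G'_L=f(G_{L+1})$ for every $L$, the sets $S_L:=f^{-1}(w)\cap G_{L+1}$ are non-empty, and each is finite by the properness of $f$ on $G_{L+1}$. As $(G_{L+1})$ is decreasing, $(S_L)$ is a decreasing sequence of non-empty finite sets and therefore stabilises, so $\bigcap_L S_L\ne\emptyset$. Any $z$ in this intersection satisfies $f(z)=w$ and lies in $\bigcap_L G_{L+1}\subset\bigcap_L A_R^{-L-1}(f)^c=A(f)^c$.

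The main obstacle is then to upgrade ``$z$ lies in some component of $A(f)^c$ contained in every $G_{L+1}$'' to the statement that $z\in K$, which will give $w=f(z)\in f(K)$ and hence $K'\subset f(K)$. I would close this gap via the following separation claim: if $K_1,K_2$ are distinct components of $A(f)^c$, then there exists $L$ such that $K_1$ and $K_2$ lie in different components of $A_R^{-L}(f)^c$. To prove the claim, recall that each component of $A(f)^c$ is compact by \cite[Theorem 1.6]{RS10a}, quoted in the introduction. Choose a closed ball $\overline B$ large enough to contain $K_1\cup K_2$ in its interior, so that $A(f)^c\cap\overline B$ is compact; in this compact Hausdorff space $K_1$ and $K_2$ remain distinct components, and so can be separated by a clopen subset. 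Thickening this clopen set by a small enough distance (and enlarging $\overline B$ if necessary so that the thickening is contained in $\overline B$) produces an open set $U$ with $K_1\subset U$, $K_2\cap\overline U=\emptyset$, $\overline U\subset\overline B$, and boundary $\partial U$ disjoint from $A(f)^c$; thus $\partial U$ is a compact subset of $A(f)=\bigcup_L A_R^{-L}(f)$. Since the latter is an increasing union of closed sets, compactness of $\partial U$ gives $\partial U\subset A_R^{-L}(f)$ for some $L$, and $\partial U$ then separates $K_1$ from $K_2$ in $A_R^{-L}(f)^c$.

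Finally, applying the separation claim with $K_1=K$ and $K_2$ the component of $A(f)^c$ containing $z$ forces the two components to coincide (otherwise they would be separated by some $A_R^{-L}(f)$, contradicting that both lie in the same component $G_{L+1}$ of $A_R^{-L-1}(f)^c$ for every $L$). Hence $z\in K$, so $w\in f(K)$, and as $w\in K'$ was arbitrary we conclude $f(K)=K'$.
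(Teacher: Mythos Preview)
Your overall strategy is sound and genuinely different from the paper's. The paper argues by contradiction: it fixes a bounded simply connected domain $G$ with $K\subset G$ and $\partial G\subset A(f)$ (from the spider's web), notes that $G$ contains only finitely many components of $f^{-1}(K')$, separates $K$ from the others by a Jordan curve $C\subset G$ (via Newman's theorem), and then observes that $f(C)$ surrounds $f(K)$, misses $K'$, yet cannot surround any $w\in K'\setminus f(K)$, contradicting the connectedness of $K'$. Your route through the proper maps $f\colon G_{L+1}\to G'_L$ and a decreasing-finite-sets argument to locate a preimage in $\bigcap_L G_{L+1}$ is a nice alternative that exploits the level structure more directly.

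There is, however, a genuine gap in your proof of the separation claim. You assert that $A(f)^c\cap\overline B$ is compact, but this requires $A(f)$ to be open, and in general it is not: since $J(f)=\partial A(f)$ and $A(f)\cap J(f)\ne\emptyset$ (for instance the fundamental loops $L_n$ lie in $A(f)\cap J(f)$ whenever $f$ has no multiply connected Fatou component, by Lemma~\ref{props}(f)), the set $A(f)$ contains points of its own boundary. Hence $A(f)^c$ is not closed and your compact-Hausdorff quasi-component argument does not apply. The claim itself is true, and one clean repair is to show directly that $\bigcap_L G_L$ is connected (and therefore equals $K$): by Lemma~\ref{props}(e) we have $f^n(\partial G_L)=L_{n-L}$ and $f^n(\partial G_{L+N})=L_{n-L-N}$ for large $n$, and by Lemma~\ref{props}(d) these images are disjoint, so $\partial G_{L+N}\cap\partial G_L=\emptyset$ and hence $\overline{G_{L+N}}\subset G_L$. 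Then $\bigcap_L G_L=\bigcap_k G_{kN}=\bigcap_k\overline{G_{kN}}$ is a nested intersection of continua, hence connected. With this in hand your preimage $z$ lies in $\bigcap_L G_L=K$, and the argument concludes as you indicate.
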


\begin{proof}
As $ A(f) $ is completely invariant, it is clear that $ f(K) $ must lie in a component of $ A(f)^c $, say $ K'. $

Since $ A_R(f) $ is a \sw, components of $ A(f)^c $ are compact \cite[Theorem~1.6]{RS10a}, so each component of $ f^{-1}(K') $ must be closed and lie in some component of $ A(f)^c $.  One such component must contain $ K $, and indeed be equal to $ K $ since $ K $ is itself a component of $ A(f)^c $.  

Suppose $ w \in K' \setminus f(K). $  Since $ A_R(f) $ is a \sw, there exists a bounded, simply connected domain $ G $ containing $ K $ whose boundary lies in $ A(f) $.  The domain $ G $ can contain only a finite number of components of $ f^{-1}(K'). $

Now by \cite[Theorem 3.3, p.143]{New}, there is a Jordan curve $ C $ lying in $ G $ that surrounds $ K $ and separates $ K $ from all other components of $ f^{-1}(K'). $   It follows that $ f(C) $ is a curve that surrounds $ f(K) $ and does not meet $ K' $.  Furthermore, $ f(C) $ cannot surround $ w \in K' $ since $ C $ does not surround any solution of $ f(z) =~w $.  This contradicts the connectedness of $ K' $, and it follows that $ K' \setminus f(K) = \emptyset $.  Thus $ f(K) $ is a component of $ A(f)^c $, as required.  
\end{proof}

\section{The topology of components of $ A(f)^c $}
\label{nature}

\setcounter{equation}{0} 

In this section, we prove Theorem \ref{buried}.  Throughout the section, let $ f $ be a \tef, let $ A_R(f) $ be a \sw\ and let $ K $ be a component of $ A(f)^c $. 

Since $ J(f) = \partial A(f) $ \cite[Theorem 5.1]{RS10a}, it is immediate that $ \partial K \subset J(f) $ and $ \textit{int } K \subset F(f) $. If $ K $ meets the closure of some Fatou component $ U $, then we must have $ U \cap A(f) = \emptyset $, since otherwise $ \overline{U} \subset A(f) $ by \cite[Theorem 1.2]{RS10a}.  Hence $ U \subset K $.  But since $ A_R(f) $ is a \sw, $ K $ is compact, so $ \overline{U} \subset K. $  This proves part (a).  

For the proof of parts (b) and (c), observe that, using (\ref{union}) and (\ref{nested}), we can write:
\begin{equation*} 
K = \bigcap_{l \in \N} G_l, \qquad \text{     with } G_l \supset G_{l+1} \text{   for all } l \in \N, 
\end{equation*}
where $ G_l $ is the component of $ A_R^{-l} (f)^c $ containing $ K $. Thus, for any \nhd\ $ V $ of $ K $, there exists $ M \in \N $ such that $ \overline{G}_l \subset V $ for all $ l \geq M. $  

Now let $ (H_n)_{n \geq 0} $ and $ (L_n)_{n \geq 0} $ be the sequences of fundamental holes and loops for $ A_R(f) $, as defined in Section \ref{prelim}.  Then it follows from Lemma \ref{props}(e) that, for any $ m \geq 0 $ and for sufficiently large $ n \geq m $:
\begin{equation*}
 f^{M + n} (\partial G_{M+m}) = L_{n-m}.
\end{equation*}
Now, for any $ n \geq 0 $, $ L_n \subset A(f) $ and, if $ f $ has no \mconn\ Fatou components, we also have $ L_n \subset J(f) $ by Lemma \ref{props}(f).  Since $ L_n $ is closed, it follows that $ V $ contains a closed subset of $ A(f) \cap J(f) $ that surrounds $ K $.  Further, if $ K $ has empty interior, then part (a) implies that $ K $ consists of buried points of $ J(f) $.  This proves part (b) in the case where there are no \mconn\ Fatou components.  

Now suppose that $ f $ has a \mconn\ Fatou component, $ U $.  Note that $ \overline{U} \subset A(f) $, by \cite[Theorem 4.4]{RS10a}.  Thus part (c) of Theorem \ref{buried} implies part (b), and we need only prove part (c).  

Since $ L_0 $ is bounded, Lemma \ref{baker} implies that we can choose $ k \in \N $ so that:
\begin{equation*}
\label{BWD}
f^{j+1}(U) \text{ surrounds } f^j(U) \text{ for } j \geq k, \text{ and } f^k(U) \text{ surrounds } L_0 .
\end{equation*}   
Since $ f^k(U) $ is bounded, it follows from Lemma \ref{props}(a) that we may also choose $ P \in \N $ so that:
\begin{equation*}
\label{surrounds}
f^k(U) \subset H_P.
\end{equation*}   

Furthermore, by Lemma \ref{props}(e), there exists $ N \in \N $ (depending on $ M $ and $ P $) such that: 
\begin{align*}
\label{G}
f^{M + N + P} (G_M) & = H_{N+P},\\
f^{M + N + P} (\partial G_M) & = L_{N+P},\\
f^{M + N + P} (G_{M+P}) & = H_N
\end{align*}
and
\begin{align*}
f^{M + N + P} (\partial G_{M+P}) & = L_N.
\end{align*}

Since $ f^k(U) \subset H_P $, it is clear that $ f^{N + k}(U) \subset H_{N + P} $, and by our choice of $ k $, $ f^{N + k}(U) $ surrounds $ f^k(U) $.  

We claim that $ f^{N + k}(U) $ also surrounds $ H_N $.  For let $ W $ denote the interior of the complementary component of $ f^k(U) $ that contains $ H_0 $.  Then $ W \subset \widetilde{f^k(U)} $, so, using (\ref{tilde}):
\[ H_N = f^N(H_0) \subset f^N(W) \subset f^N(\widetilde{f^k(U)}) \subset \widetilde{f^{N + k}(U)}.\]
Now $ \partial W \subset \partial f^k(U) \subset J(f) $, and thus it follows that $ \partial f^N(W) \subset f^N(\partial W) $ cannot meet $ f^{N + k}(U). $  We have therefore shown that $ f^N(W) $ lies in $ \widetilde{f^{N + k}(U)} $, but that its boundary does not meet $ f^{N + k}(U) $.  Thus $ f^{N + k}(U) $ surrounds $ f^N(W) $ and hence $  H_N $, as claimed.

We now show that $ G_M $ must contain a \mconn\ Fatou component and that this surrounds $ K. $  To do this, let $ \Gamma $ be a Jordan curve in $ f^{N + k}(U) $ that surrounds $ 0 $.  Then, of the finitely many components of $ f^{-(M + N + P)} (\widetilde{\Gamma}) $ that lie in $ G_M $, one must contain $ G_{M+P} $, since $ f^{M + N + P} (G_{M+P}) = H_N \subset \widetilde{\Gamma}. $  Call this component $ \Lambda $, and its boundary $ \gamma $.  Since $ f^{M + N + P} $ is a proper map of the interior of $ \Lambda $ onto the interior of $ \widetilde{\Gamma} $, we have $ f^{M + N + P}(\gamma) = \Gamma $, and thus $ \gamma $ must lie in a Fatou component, $ U' $ say, that is contained in $ G_M $.  Furthermore, $ U' $ is \mconn, since $ \gamma $ surrounds $ G_{M+P} $ which contains $ \partial K \subset J(f). $  Thus $ G_M $ contains a \mconn\ Fatou component surrounding $ K $, and therefore so does our arbitrary \nhd\ $ V $ of $ K $.    

Finally, suppose that $ K $ has empty interior, so $ K \subset J(f) $.  Since $ A(f) $ is connected, $ K^c $ has only one component, and the remainder of part~(c) therefore follows immediately from Theorem \ref{top}.

\section{Orbits of components of $ A(f)^c $}
\label{orbit}

\setcounter{equation}{0} 

In this section, we give a characterisation of the orbits of the components of $ A(f)^c $ when $ A_R(f) $ is a \sw, and prove that $ A(f)^c $ then has uncountably many components of various types (Theorem \ref{uncountable}).  To this end, we first describe a natural partition of the plane that enables us to encode information about the orbits of the components of $ A(f)^c $.  

Throughout this section, let $ f $ be a \tef, let $ R>0 $ be such that $ M(r, f) > r $ for $ r \geq R $, and let $ A_R(f) $ be a \sw.  Recall from Theorem \ref{components} that $ f $ maps a component $ K $ of $ A(f)^c $ onto another such component.  We refer to the sequence of iterates of $ K $ as its \textit{orbit}, and to any infinite subsequence of its iterates as a \textit{suborbit}.

To construct the partition, we proceed as follows.  Let $ (L_m)_{m \geq 0} $ be the sequence of fundamental loops for $ A_R(f) $, as defined in Section \ref{prelim}.  Now, by Lemma \ref{props}(c) and (d), there exists $ N \in \N $ such that $ L_{N+m} \cap L_m = \emptyset $ and $ f^N(L_{mN}) = L_{(m+1)N} $ for $ m \geq 0 $.  Thus $(L_{mN})_{m \geq 0} $ is a sequence of disjoint loops, and $ f^N $ maps any such loop onto its successor in the sequence. We use these loops to define our partition.  To simplify the exposition, we assume (without loss of generality) that $ N = 1 $, so that our sequence of disjoint loops is $(L_m)_{m \geq 0} $.

Now define
\begin{equation} \label{defn1}
B_0 = H_0,  
\end{equation}
and
\begin{equation} \label{defn2}
B_m = H_m \setminus H_{m-1}, \text{  for  } m \geq 1, 
\end{equation}
where $ (H_m)_{m \geq 0} $ is the sequence of fundamental holes for $ A_R(f) $. 

Then, for each $ m \geq 1 $, $ B_m $ is a connected set surrounding $ 0 $ and:
\[ \partial B_m = L_m \cup L_{m-1}. \]
Also, for any $ k \in \N $:
\[ \bigcup_{m \geq k} B_m = \C \setminus H_{k-1},\] 
and indeed $ \bigcup_{m \geq 0} B_m = \C $.  It follows that the sets $ B_m, m \geq 0, $ form a partition of the plane.   

Hence, for each point $ z \in \C $, there is a unique sequence $ \underline{s} = s_0s_1s_2 \ldots $ of non-negative integers (which we call the \textit{itinerary} of $ z $ with respect to $ A_R(f) $), such that:
\[ f^k(z) \in B_{s_k},  \text{ for } k \in \N \cup \lbrace 0 \rbrace. \]

Evidently, the itinerary of a point encodes information about its orbit, and we now investigate which orbits are possible.  We begin with the following lemma, whose proof is based on an argument in the proof of \cite[Lemma 6]{RS09}.

\begin{lemma}
\label{coverB}
Let $ B_m, m \geq 0, $ be as defined in (\ref{defn1}) and (\ref{defn2}).  Then, for each $ m \geq 0 $, exactly one of the following must apply:
\begin{equation} \label{first}
f(\overline{B}_m) = \overline{B}_{m+1},
\end{equation}
or
\begin{equation} \label{second}
f(\overline{B}_m) = \overline{H}_{m+1}.
\end{equation} 
Furthermore, (\ref{second}) holds for $ m = 0 $ and for infinitely many $ m $.
\end{lemma}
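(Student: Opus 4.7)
The plan is to establish the dichotomy through the proper-map structure of $f$ on the fundamental holes, then use a preimage count to distinguish the two cases. First, I would compile the ambient relations from Lemma \ref{props}(c) together with continuity of $f$: since $f(H_m) = H_{m+1}$ is open and $f(\overline{H}_m)$ is compact, one gets $f(\overline{H}_m) = \overline{H}_{m+1}$ for each $m \geq 0$. Combining this with $\overline{H}_m = \overline{H}_{m-1} \cup \overline{B}_m$ and $f(\overline{H}_{m-1}) = \overline{H}_m$, I obtain $\overline{B}_{m+1} \subset f(\overline{B}_m) \subset \overline{H}_{m+1}$ for $m \geq 1$. Since $\overline{H}_{m+1} \setminus H_m = \overline{B}_{m+1}$, establishing the dichotomy reduces to showing that $f(\overline{B}_m) \cap H_m$ is either empty or all of $H_m$.

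Second, I would use properness to pin this down. By Lemma \ref{basic}(a), both $f: H_{m-1} \to H_m$ and $f: H_m \to H_{m+1}$ are proper, of some degrees $d_{m-1}$ and $d_m$ respectively. For any $w \in H_m \subset H_{m+1}$ the $d_m$ preimages of $w$ in $H_m$ (counted with multiplicity) split as: exactly $d_{m-1}$ in $H_{m-1}$ by properness of $f|_{H_{m-1}}$; none on $L_{m-1}$, since $f(L_{m-1}) = L_m$ misses $H_m$; and the remaining $d_m - d_{m-1}$ in $B_m \setminus L_{m-1}$. Hence if $d_m = d_{m-1}$ then $f(B_m)$ avoids $H_m$ entirely, yielding case (\ref{first}); while if $d_m > d_{m-1}$ then every point of $H_m$ has a preimage in $B_m$, so $H_m \subset f(B_m) \subset f(\overline{B}_m)$, yielding case (\ref{second}).

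Third, I would finish with the two explicit claims. For $m = 0$, $\overline{B}_0 = \overline{H}_0$ gives $f(\overline{B}_0) = \overline{H}_1$ directly, which is case (\ref{second}). For the infinitely-many claim, $(d_m)$ is non-decreasing, since the $d_{m-1}$ preimages in $H_{m-1} \subset H_m$ contribute to $d_m$; so if case (\ref{first}) held for all $m \geq M$, then $d_m$ would stabilise at $d_{M-1}$, and $(d_m)$ would be bounded. But by Picard's great theorem there is $w \in \C$ with $|f^{-1}(w)| = \infty$, and because $\bigcup_m H_m = \C$, the number of distinct preimages of $w$ lying in $H_m$ grows without bound. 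Once $w \in H_{m+1}$, that count is a lower bound for $d_m$, so $d_m \to \infty$, giving the required contradiction.

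The main obstacle will be the degree balance in the second step: one has to be sure that the $d_{m-1}$ preimages in $H_{m-1}$ really coincide with the $H_{m-1}$-contribution to the $d_m$ preimages in $H_m$, and that no preimage of an interior point of $H_m$ hides on the boundary loop $L_{m-1}$. Both points reduce to $f(L_{m-1}) = L_m \subset \partial H_m$ via Lemma \ref{props}(c), but they are precisely what prevents the clean alternatives in the dichotomy from being contaminated by an intermediate possibility.
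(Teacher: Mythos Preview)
Your proof is correct but follows a different route from the paper's. For the dichotomy, the paper argues topologically: since $f$ is open and continuous, $\partial f(\overline{B}_m) \subset f(\partial \overline{B}_m) = L_m \cup L_{m+1}$, so if $f(\overline{B}_m)$ meets the domain $H_m$ at all it must contain all of $H_m$; together with the inclusion $\overline{B}_{m+1} \subset f(\overline{B}_m) \subset \overline{H}_{m+1}$ (which you also derive) this forces the two alternatives. You instead count preimages using the degrees $d_{m-1}, d_m$ of the proper maps $f|_{H_{m-1}}, f|_{H_m}$, showing that (\ref{second}) occurs precisely when $d_m > d_{m-1}$. For the infinitely-many claim, the paper argues dynamically: if (\ref{first}) held for all large $m$ then $f(\C \setminus H_{k-1}) \subset \C \setminus H_{k-1}$, so by Montel's theorem this set would lie in $F(f)$, contradicting the unboundedness of $J(f)$. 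You instead deduce $d_m \to \infty$ from Picard's theorem. The paper's arguments are shorter and more elementary; yours has the bonus of identifying exactly which $m$ give (\ref{second}), namely the jumps of the non-decreasing degree sequence $(d_m)$.
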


\begin{proof}
Note first that, since $ f $  maps compact sets to compact sets and is an open mapping, we have:
\begin{equation} \label{BIIB}
\partial f (\overline{B}_m) \subset f(\partial \overline{B}_m).
\end{equation}
Now if $ m \geq 1 $, then clearly $ \overline{H}_m = H_{m-1} \cup \overline{B}_m, $ and so, by Lemma \ref{props}(c):
\[ \overline{H}_{m+1} = H_m \cup f(\overline{B}_m).  \]
We thus have:
\begin{equation} \label{B}
\overline{B}_{m+1} \subset f(\overline{B}_m) \subset \overline{H}_{m+1}.
\end{equation}
But $ f(\partial \overline{B}_m) = L_{m+1} \cup L_m $, so (\ref{BIIB}) implies that if $ f $ maps any point of $ \overline{B}_m $ into $ H_m $, then $ f(\overline{B}_m) $ must contain the whole of $ H_m. $  Taken together with (\ref{B}), this shows that (\ref{first}) and (\ref{second}) are the only possibilities for $ m \geq 1 $.  Note also that $ f(\overline{B}_0) = \overline{H}_1 $, so (\ref{second}) applies when $ m = 0 $.  

Now suppose that (\ref{second}) held for only finitely many $ m $.  Then, for sufficiently large~$ k $, we would have:
\[ f(\C \setminus H_{k-1}) = \bigcup_{m \geq k} f(\overline{B}_m) \subset \C \setminus H_{k-1}, \]
so $ \C \setminus H_{k-1} $ would lie in the Fatou set, which is impossible.  
\end{proof}

It follows from Lemma \ref{coverB} that there is a strictly increasing sequence of integers $ m(j), j \geq 0 $, with $ m(0) = 0 $, such that (\ref{second}) holds if and only if $ m = m(j) $, for $ j \geq 0 $.  

We need the following lemma \cite[part of Lemma 1]{RS09}:
\begin{lemma} \label{cover}
Let $ E_n, n \geq 0, $ be a sequence of compact sets in $ \C $, and $ f : \C \to \widehat{\C} $ be a continuous function such that
\[ f(E_n) \supset E_{n+1}, \text{   for } n \geq 0. \]
Then there exists $ \zeta $ such that $ f^n(\zeta) \in E_n $, for $ n \geq 0 $.
\end{lemma}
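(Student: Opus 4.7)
The plan is to apply the finite intersection property to the nested sequence of compact sets consisting of points whose first $n$ iterates respect the chain $E_0, E_1, \ldots, E_n$. Specifically, for each $n \geq 0$ I would set
\[ F_n = \{ z \in E_0 : f^k(z) \in E_k \text{ for } 0 \leq k \leq n \}, \]
so that $F_0 \supset F_1 \supset F_2 \supset \cdots$. It then suffices to show each $F_n$ is non-empty and compact, for the nested intersection $\bigcap_{n \geq 0} F_n$ would then be non-empty, and any point in it could serve as $\zeta$.

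For non-emptiness I would use the covering hypothesis $f(E_{k-1}) \supset E_k$ to pull an arbitrary chosen point $w_n \in E_n$ backwards successively to $w_{n-1} \in E_{n-1}, \ldots, w_0 \in E_0$ with $f(w_{k-1}) = w_k$, yielding a point of $F_n$. For compactness I would observe that $F_n$ is a closed subset of the compact set $E_0$: if $z_j \to z$ with $z_j \in F_n$, then $z \in E_0$, and continuity of $f$ combined with the closedness of each $E_k$ gives $f^k(z) \in E_k$ by induction on $k$, so $z \in F_n$.

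The only subtle point is that $f$ maps into $\widehat{\C}$, so iterates could in principle meet $\infty$ and break the inductive continuity argument. However, for any $z \in F_n$ each iterate $f^k(z)$ with $0 \leq k \leq n$ lies in the compact set $E_k \subset \C$, so iteration is well-defined throughout and the continuity step in the closedness proof goes through unimpeded. The proof is essentially a standard compactness argument and I do not anticipate any genuine obstacle.
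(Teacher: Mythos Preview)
Your argument is correct and is the standard one: define the nested compacta $F_n$, establish non-emptiness by pulling back along the covering relations, establish closedness via continuity and the closedness of each $E_k$ in $\widehat{\C}$, and conclude by the finite intersection property. The care you take with the possibility $f(z)=\infty$ is appropriate and your inductive handling of it is sound, since at every stage the relevant iterate is trapped in a compact subset of $\C$.

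As for comparison with the paper: the paper does not supply a proof of this lemma at all. It is simply quoted as part of Lemma~1 of Rippon and Stallard, \emph{Slow escaping points of meromorphic functions}, and used as a black box. The argument in that reference is essentially the one you give. So there is nothing to contrast; your proposal fills in exactly the standard proof that the paper chose to cite rather than reproduce.
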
 

We now describe a rule for constructing integer sequences, $ \underline{s} = s_0s_1s_2 \ldots $, such that:
\begin{itemize}
\item the itinerary of any point $ z \in \C $ satisfies the rule, and
\item with limited exceptions, any integer sequence constructed according to the rule corresponds to the itinerary of some point $ z \in \C. $
\end{itemize}
Recall that, although we are assuming that $ N = 1 $ for convenience of exposition, these itineraries are defined with respect to iteration under $ f^N $, where $ N \in \N $ is such that
\[ L_{N+m} \cap L_m = \emptyset,  \quad \text{ for } m \geq 0. \]
Our rule for constructing integer sequences $ \underline{s} = s_0s_1s_2 \ldots $ is that, for each $ n \geq 0 $, we derive $ s_{n+1} $ from $ s_n $ as follows:
\begin{enumerate}[(1)]
\item if $ s_n = m(j) $ for some $ j \geq 0 $, then:
\[ s_{n+1} \in \lbrace 0, 1, 2, \ldots, m(j), m(j) + 1 \rbrace;  \]
\item otherwise, $ s_{n+1} = s_n + 1. $
\end{enumerate}

The itinerary of any point $ z \in \C $ satisfies this rule by Lemma \ref{coverB}, since:
\begin{equation}
\label{loops}
f(L_m) = L_{m+1}, \quad  m \geq 0.
\end{equation} 
On the other hand, if $ \underline{s} $ is an integer sequence constructed according to this~rule, and we put $ E_n = \overline{B}_{s_n} $ for $ n \geq 0 $, then it follows from Lemma \ref{coverB} that the sequence of compact sets $ (E_n)_{n \geq 0} $ and the function $ f $ satisfy the conditions of Lemma \ref{cover}.  Hence there exists a point $ z \in E_0 = \overline{B}_{s_0} $ such that $ f^n(z) \in E_n = \overline{B}_{s_n} $, for $ n \geq 0 $.

But the itineraries of points are defined relative to the sets $ B_m, m \geq 0 $, which partition the plane, rather than the compact sets $ \overline{B}_m = B_m \cup L_m $ we have used in the construction of points corresponding to  integer sequences.  However, if any iterate of a point lies in $ L_m $ for some $ m \geq 0 $, then all subsequent iterates also lie in a fundamental loop by (\ref{loops}).  Thus the only situation in which an integer sequence constructed according to our rule may not coincide with the itinerary of a point derived from the sequence by using Lemma \ref{cover} is where the orbit of the point ends on the fundamental loops $ (L_m)_{m \geq 0}. $ 

In particular, since $ L_m \subset A(f), m \geq 0 $, if an integer sequence $ \underline{s} $ gives rise to a point $ z $ in $ A(f)^c $, then the itinerary of $ z $ is  $ \underline{s} $.  Furthermore,  any two points in $ A(f)^c $ with different itineraries must necessarily lie in different components of $ A(f)^c $, so all points in the same component of $ A(f)^c $ as $ z $ have itinerary $ \underline{s} $.   

We are now in a position to prove Theorem \ref{uncountable} which states that, if $ A_R(f) $ is a \sw, then $ A(f)^c $ has uncountably many components:
\begin{myindentpar}{0.5cm}
\begin{enumerate}[(a)]
\item whose orbits are bounded;
\item whose orbits are unbounded but contain a bounded suborbit; and
\item whose orbits escape to infinity.
\end{enumerate}
\end{myindentpar}

\begin{proof} [Proof of Theorem \ref{uncountable}]

We examine each of the orbit types $ (a) - (c) $ in turn, showing how to construct an itinerary for a point in a component of $ A(f)^c $ with that type of orbit, and proving that there must be uncountably many such components.  Note that many alternative constructions are possible for each orbit type.

For type $ (a) $, components with bounded orbit, we can construct an itinerary in the following way:
\begin{itemize}
\item choose $ j_0 \geq 2 $, and put $ s_0 = m(j_0) $;
\item for $ n \geq 0 $:
\begin{enumerate}[(i)]
\item if $ s_n = m(j_0) $, put $ s_{n+1} = m(j_0)-1 $; 
\item otherwise, put $ s_{n+1} = s_n + 1 $. 
\end{enumerate} 
\end{itemize}

Evidently, by Lemma \ref{cover} and the ensuing discussion, we thereby obtain a point $ a \in B_{m(j_0)} \cap A(f)^c $ whose orbit is bounded.

To prove that there are uncountably many such points, we use an idea from a proof by Milnor \cite[Corollary 4.15, p.49]{Mil}. Given any finite partial itinerary $ s_0s_1 \ldots s_k $ corresponding to the first $ k $ iterations of the point $ a $, then for the next value of $ n > k $ for which $ s_n = m(j_0) $, instead of assigning $ s_{n+1} $ the value  $ m(j_0) - 1 $ under (i) above, we could instead put $ s_{n+1} = m(j_0) - 2. $ The remaining $ s_n $ are then chosen as above.  By Lemma \ref{cover}, this sequence gives rise to another point $ a' \in B_{m(j_0)} \cap A(f)^c $ with the same finite partial itinerary $ s_0s_1 \ldots s_k $ as~$ a $, but with an ultimately different bounded orbit.  

Thus the finite partial itinerary $ s_0s_1 \ldots s_k $ can be extended in two different ways to yield two further finite partial itineraries, each of which may again be extended in the same way. By continuing this process, it follows that $ s_0s_1 \ldots s_k $ can be extended in uncountably many ways, and Lemma \ref{cover} shows that each resulting infinite itinerary corresponds to a distinct point in $ B_{m(j_0)} \cap A(f)^c $.  Since any two points in $ A(f)^c $ with different itineraries must lie in different components of $ A(f)^c, $ it follows that there are uncountably many components of $ A(f)^c $ with bounded orbits.

To construct an itinerary of type $ (b) $, i.e. for a component of $ A(f)^c $ whose orbit is unbounded but contains a bounded suborbit, we can proceed as follows:
\begin{itemize}
\item put $ s_0 = 0 $;
\item for $ n \geq 0 $:
\begin{enumerate}[(i)]
\item if there exists $ j \geq 2 $ such that $ s_n = m(j) $ and $  s_i \neq m(j) $ for $ i = 0, 1, 2, \ldots, n-1 $, put $ s_{n+1} = 0 $; 
\item otherwise, put $ s_{n+1} = s_n + 1. $
\end{enumerate}
\end{itemize}

By Lemma \ref{cover}, we thereby obtain a point $ b \in B_0 \cap A(f)^c $ whose orbit is unbounded, but which visits $ B_0 $ infinitely often.  Evidently, at any stage when the orbit returns to $ B_0 $, we could equally well have returned it to $ B_1 $, and it therefore follows by the same argument as for type $ (a) $ that there are uncountably many components of $ A(f)^c $ with orbits of type $ (b) $.

Finally, consider type $ (c) $, i.e. components of $ A(f)^c $ whose orbits escape to infinity.  

For each $ i \in \N $, let $ j_i $ be the largest value of $ j $ such that:
\[ \widetilde{B}_{m(j)} \subset \lbrace z : \vert z \vert < M^i(R) \rbrace, \]
or, if no such values of $ j $ exist, let $ j_i = 0. $  Let $ I $ be the smallest value of $ i $ for which $ j_i \neq 0. $

To construct an itinerary of type $ (c) $, our procedure is:
\begin{itemize}
\item put $ s_0 = m(j_I) $;
\item for $ n \geq 0 $:
\begin{enumerate}[(i)]
\item if $ s_n = m(j_i) $ for some $ i \geq I $, and if $  n \leq 2i - I $, put $ s_{n+1} = m(j_i) $; 
\item otherwise, put $ s_{n+1} = s_n + 1. $
\end{enumerate}
\end{itemize}

The purpose of this construction is to keep the orbit of the constructed point within the closure of $ \widetilde{B}_{m(j_i)}, i \geq I, $ until at least $ 2i - I $ iterations have taken place.  To see that a point $ z $ with such an itinerary lies in $ A(f)^c $, note that, for all $ i \geq I $:
\begin{equation}
\label{twice}
\vert f^{2i - I}(z) \vert < M^i(R).
\end{equation}
It follows that there is no value of $ L \in \N $ such that:
\[ \vert f^{i+L}(z) \vert \geq M^i(R), \text{  for all } i \in \N, \]
since putting $ i = I + L $ contradicts (\ref{twice}).  Thus, from the definition, $ z \notin A(f). $

It now follows from Lemma \ref{cover} that we obtain a point $ c \in B_{m(j_I)} \cap A(f)^c $ which escapes to infinity.  Given any finite partial itinerary $ s_0s_1 \ldots s_k $ corresponding to the first $ k $ iterations of $ c $, then for the next value of $ n > k $ such that, for some $ i \geq I $,
\[ s_n = m(j_i) \text{  and   }  n = 2i - I + 1, \]
instead of applying (ii) above, we could equally well put $ s_{n+1} = m(j_i) $. The finite partial itinerary $ s_0s_1 \ldots s_k $ can therefore be extended in two different ways to yield two further finite partial itineraries, corresponding to two different points in $ B_{m(j_I)} \cap A(f)^c $ with the same initial iteration sequence, but with ultimately different orbits escaping to infinity. Thus, using the same argument as previously, there are uncountably many components of $ A(f)^c $ with orbits of type $ (c) $.  This completes the proof.
\end{proof}

\textit{Remark.}  The method of proof of Theorem \ref{uncountable} can also be applied to show the existence of components of $ A(f)^c $ with other types of orbits.  For example, using \cite[Theorem 1]{RS09}, we can adapt the proof for orbits of type (c)  to show that, if $ A_R(f) $ is a \sw, then there are uncountably many components $ K $ of $ A(f)^c $ whose orbits escape to infinity arbitrarily slowly, in the sense that, if $ (a_n) $ is any positive sequence such that $ a_n \to \infty $ as $ n \to \infty $, then:
\[ \vert f^n(z) \vert \leq a_n, \quad \text{ for sufficiently large } n, \text{ and for all } z \in K. \]

\section{Components of $ A(f)^c $ with bounded orbits}
\label{bounded}
\setcounter{equation}{0} 

In this section, we again assume that $ A_R(f) $ is a \sw, and we examine further the components of $ A(f)^c $ with bounded orbits.  We show that, in this case, we can say much more about the nature of such components than is given by Theorem \ref{buried}.  We do this by following a method used by Kisaka \cite{K3} and Zheng \cite{Z1}. 

Building on results in \cite{K2}, Kisaka proved in  \cite[Theorem A]{K3} that, if $ f $ is a \tef\ with a \mconn\ Fatou component, and  $ C $ is a component of $ J(f) $ with bounded orbit, then there is a polynomial $ g $ such that $ C $ is quasiconformally homeomorphic to a component of the Julia set of $ g $.  Furthermore, he showed that: 
\begin{enumerate}
\item if the complement of $ C $ is connected, then $ C $ is a buried component of $ J(f) $;
\item if $ C $ is a wandering component of $ J(f) $, then it is a buried singleton component of $ J(f) $.
\end{enumerate}

Zheng \cite{Z1} used a similar technique to obtain results about Fatou components with unbounded orbits for certain \tef s (see our remark following the proof of Theorem \ref{coroll} below for further details). 

We now prove results analogous to those of Kisaka, but expressed in terms of components of $ A(f)^c $ rather than of $ J(f) $, and with $ f $ belonging to the wider class of \tef s for which $ A_R(f) $ is a \sw. 

Note that Theorem \ref{buried}(b) and (c) already gives us an analogue of (1) in Kisaka's result.  Indeed, it does  more, for there we do not assume that the component of $ A(f)^c $ has bounded orbit.

We now prove Theorem \ref{poly}, which establishes the existence of a quasiconformal conjugacy with a polynomial for components of $ A(f)^c $ with bounded orbit when $ A_R(f) $ is a \sw.  The proof uses the notion of a \textit{polynomial-like map}, introduced by Douady and Hubbard, and their Straightening Theorem (see Chapter VI of \cite{CG}, and \cite{DH}).
\begin{definition*}
Let $ D_1 $ and $ D_2 $ be bounded, \sconn\ domains with smooth boundaries such that $ \overline{D_1} \subset D_2 $.  Let $ h $ be a proper analytic map of $ D_1 $ onto $ D_2 $ with $ d $-fold covering, where $ d \geq 2. $  Then the triple $ (h; D_1, D_2) $ is termed a \textit{polynomial-like map} of degree $ d $.  

We also define the \textit{filled Julia set} $ K(h; D_1, D_2) $ of the polynomial-like map $ h $ to be the set of points all of whose iterates remain in $ D_1 $, i.e.  
\[  K(h; D_1, D_2) = \bigcap_{k \geq 0} h^{-k} (D_1).\]
\end{definition*}

\begin{theorem*}[Douady and Hubbard, Straightening Theorem]
If $ (h; D_1, D_2) $ is a polynomial-like map of degree $ d \geq 2 $, then there exists a quasiconformal map $ \phi : \C \to \C $ and a polynomial $ P $ of degree $ d $ such that $ \phi \circ h = P \circ \phi $ on $ \overline{D_1} $.  Moreover:
\[ \phi ( K(h; D_1, D_2)) = K(P),  \]
where $ K(P) $ is the filled Julia set of the polynomial $ P. $
\end{theorem*}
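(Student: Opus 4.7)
The plan is to apply the classical Douady--Hubbard strategy: manufacture a quasiregular self-map $\tilde h : \C \to \C$ of degree $d$ that extends $h$ and agrees with $z \mapsto z^d$ near infinity, construct a $\tilde h$-invariant almost complex structure on $\C$ whose dilatation is inherited only from a single interpolation in the annulus between $\partial D_1$ and $\partial D_2$, and then integrate it via the Measurable Riemann Mapping Theorem to straighten $\tilde h$ to a genuine polynomial.

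First I would construct the quasiregular extension $\tilde h$. Using the smoothness of $\partial D_1$ and $\partial D_2$, I would fix a Riemann map from $\C \setminus \overline{D_2}$ to $\C \setminus \overline{\mathbb{D}}$, extending continuously to the boundary, and define $\tilde h$ outside a suitably large disk to be $z \mapsto z^d$ in these coordinates, producing a degree-$d$ branched cover near $\infty$. Inside the annulus $D_2 \setminus \overline{D_1}$ I would quasiconformally interpolate between the boundary behaviour of $h$ on $\partial D_1$ and the model $z^d$ on $\partial D_2$; since $h : \partial D_1 \to \partial D_2$ is a smooth $d$-fold covering, this interpolation can be carried out with uniformly bounded dilatation $K < \infty$. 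The resulting $\tilde h$ is quasiregular, of topological degree $d$, coincides with $h$ on $\overline{D_1}$, is holomorphic on $D_1$ and on $\C \setminus D_R$ for some large $R$, and behaves like $z^d$ at infinity.

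Next I would build a $\tilde h$-invariant Beltrami coefficient $\mu$. The Beltrami coefficient $\mu_{\tilde h}$ of $\tilde h$ is supported in the annulus $\Omega := D_R \setminus D_1$ with $\|\mu_{\tilde h}\|_\infty < 1$. I would set $\mu = 0$ on $\C \setminus D_1$ and define $\mu$ inductively on the nested preimages of $\Omega$ inside $D_1$ by the pullback formula under successive iterates of $\tilde h$. Because $\tilde h$ is holomorphic on $D_1$, each pullback step preserves pointwise dilatation, so $\|\mu\|_\infty = \|\mu_{\tilde h}\|_\infty < 1$. The Measurable Riemann Mapping Theorem of Ahlfors and Bers then furnishes a quasiconformal homeomorphism $\phi : \C \to \C$ solving the Beltrami equation for $\mu$, normalized so that $\phi$ fixes $\infty$.

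Finally I would verify that $P := \phi \circ \tilde h \circ \phi^{-1}$ is a polynomial of degree $d$ and that the filled Julia sets correspond. The $\tilde h$-invariance of $\mu$ forces the Beltrami coefficient of $P$ to vanish almost everywhere, so $P$ is entire by Weyl's lemma. Since $\mu = 0$ outside $D_R$, $\phi$ is conformal near $\infty$, and combined with $\tilde h(z) = z^d$ there, $P$ has a pole of order $d$ at infinity and is therefore a polynomial of degree $d$. Restricting the conjugacy $\phi \circ \tilde h = P \circ \phi$ to $\overline{D_1}$ yields $\phi \circ h = P \circ \phi$. For the filled Julia set identity, properness of $h$ and $\overline{D_1} \subset D_2$ give $h^{-1}(D_1) \subset \overline{D_1}$, so $K(h;D_1,D_2)$ is exactly the set of points whose forward $\tilde h$-orbit stays in $\overline{D_1}$; since orbits of $\tilde h$ escape to infinity once they leave $D_1$ (when $R$ is chosen so that $|z^d|>|z|$ outside), this set is precisely the set of $z$ with bounded $\tilde h$-orbit, which the conjugacy transports onto $K(P)$. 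The hard part is the initial surgery step: producing a degree-$d$ quasiregular extension with smoothly matched boundary data on both $\partial D_1$ and $\partial D_2$ and with uniformly bounded dilatation, which requires the careful choice of quasiconformal interpolations on smooth annuli.
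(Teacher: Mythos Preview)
The paper does not prove the Straightening Theorem; it is quoted as a classical result of Douady and Hubbard, with references to \cite{DH} and \cite{CG}, and then applied as a black box in the proof of Theorem~\ref{poly}. So there is no proof in the paper to compare your attempt against.

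Your sketch follows the standard Douady--Hubbard surgery argument, and the overall architecture is correct. There is, however, one genuine slip in your construction of the invariant Beltrami coefficient. You write that you would set $\mu = 0$ on $\C \setminus D_1$ and then pull back into $D_1$. But the interpolation annulus $\Omega$, where $\tilde h$ has nonzero complex dilatation, lies in $\C \setminus D_1$; if $\mu = 0$ there, invariance $\tilde h^* \mu = \mu$ fails on $\Omega$, because the pullback of the standard structure by a non-holomorphic map is not standard. With $\mu \equiv 0$ you would get $\phi = \mathrm{id}$ and $P = \tilde h$, which is not holomorphic. The correct prescription is to set $\mu = 0$ only where $\tilde h$ is already holomorphic near infinity (outside $D_2$, or outside your large disk), take $\mu = \mu_{\tilde h}$ on $\Omega$, and then pull this back under iterates of $h$ into $D_1$; on the filled Julia set one sets $\mu = 0$. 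Since each orbit meets $\Omega$ at most once and $h$ is holomorphic on $D_1$, the resulting $\mu$ has $\|\mu\|_\infty = \|\mu_{\tilde h}\|_\infty < 1$, exactly as you intended in your concluding remark. With that correction, your outline is the standard proof.
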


\begin{proof}[Proof of Theorem \ref{poly}]

Let $ K $ be a component of $ A(f)^c $ with bounded orbit, and let the sequences of fundamental holes and loops for $ A_R(f) $ be $ (H_n)_{n \geq 0} $ and $ (L_n)_{n \geq 0} $ respectively. 

Since $ f $ is transcendental and the orbit of $ K $ is bounded, it follows from Lemma~\ref{props} parts (a) and (c) that we may choose $ m \in \N $ so large that the orbit of $ K $ lies in $ H_m $, and such that $ f $ is a proper map of $ H_m $ onto $ H_{m+1} $ of degree at least $ 2 $.  It then follows from Lemma \ref{props}(d) that there exists $ N \in \N $ such that:
\begin{itemize}
\item $ f^N $ is a proper map of $ H_m $ onto $ H_{m+N} $ and of $ H_{m+N} $ onto $ H_{m+2N} $, of degree at least $ 2 $; and
\item $ L_{m + N} \cap L_m = \emptyset $ and $ L_{m + 2N} \cap L_{m + N} = \emptyset $.
\end{itemize}

Now let $ \gamma $ be a smooth Jordan curve in $ H_{m + 2N} $ that surrounds $ \overline{H}_{m+N} $ and does not meet any of the critical values of $ f^N $, and let $ V $ be the bounded component of $ \gamma^c $, so that:
\[ H_m \subset \overline{H}_{m+N} \subset V \subset H_{m+2N}. \]

Define $ U $ to be the component of $ f^{-N}(V) $ that contains $ H_m $ (and hence the orbit of $ K $). Then $ U $ must lie in the component of $ f^{-N}(H_{m+2N}) $ that contains $ H_m $, i.e. $ U \subset H_{m+N} $, and so we have $ \overline{U} \subset V. $   Then $ U $ is simply connected, and $ f^N : U \to V $ is a proper map of degree at least $ 2 $.  Furthermore, since $ V $ is bounded by a smooth Jordan curve that does not meet any of the critical values of $ f^N $, it follows that $ U $ is also bounded by a smooth Jordan curve.   We have therefore established that the triple $ (f^N; U, V) $ is a polynomial-like map of degree at least $ 2 $.

Now the set $ \overline{U} $ consists of a collection of components (or parts of components) of $ A(f)^c $, together with a bounded subset of $ A(f). $  Clearly points in $ A(f) $ cannot lie in the filled Julia set $ K(f^N; U, V) $, but points in $ A(f)^c $ may do so. In particular, since the orbit of the component $ K $ under iteration by $ f $ lies in $ U $, it must also lie in $ K(f^N; U, V) $. 

Indeed, since $ f $ maps every component of $ A(f)^c $ onto another such component (Theorem \ref{components}), and points in $ A(f) $ cannot lie in $ K(f^N; U, V) $, then every component of $ A(f)^c $ in the orbit of $ K $ must be a distinct component of $ K(f^N; U, V). $  Now, by the Straightening Theorem, there is a polynomial $ g $ of degree at least~$ 2 $ such that $ K(f^N; U, V) $ is quasiconformally homeomorphic to the filled Julia set of $ g $, and thus it follows that each component of $ A(f)^c $ in the orbit of $ K $ is quasiconformally homeomorphic to a component of the filled Julia set of $ g $.
\end{proof}  

The existence of the quasiconformal mapping in Theorem \ref{poly} enables us to use polynomial dynamics to draw some further conclusions, and in particular to prove Theorem \ref{coroll}.  Part~(a)(i) of Theorem  \ref{coroll} is an analogue of $ (2) $ in Kisaka's result.  We will use the following in our proof.  

\begin{theorem*} [Qiu and Yin, Main Theorem in \cite{QY}]
For a polynomial $ g $ of degree at least $ 2 $, a component of the filled Julia set of $ g $ is a singleton if and only if its forward orbit includes no periodic component containing a critical point. 
\end{theorem*}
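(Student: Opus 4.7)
The plan is to prove the two implications by quite different methods. The reverse direction (``$\Rightarrow$''), that a singleton component forces the absence of periodic critical components in its orbit, is comparatively elementary. The forward direction (``$\Leftarrow$''), that absence of such critical components forces singleton-ness, is the substantive content of Qiu and Yin's paper and requires the Branner--Hubbard--Yoccoz puzzle machinery.

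For the easy direction, suppose $K$ is a singleton component of the filled Julia set, and let $V$ be a periodic component in the forward orbit of $K$, with $g^n(K) \subset V$ and $g^p(V) = V$. Arguing as in the proof of Theorem \ref{components} with $g$ in place of $f$ (the argument there is purely topological and applies to any polynomial acting on its filled Julia set), $g^n$ maps $K$ \emph{onto} a full component of the filled Julia set, which must therefore equal $V$. Since $K$ is a singleton and $g^n(K) = V$, the component $V$ is a singleton $\{v\}$ with $v$ periodic. If $v$ were critical for some iterate of $g$, then $g^p$ would be super-attracting at $v$, producing an open immediate basin as a Fatou component containing $v$; this Fatou component would constitute a non-trivial component of the filled Julia set, contradicting $V = \{v\}$.

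For the hard direction, assume no periodic component in the forward orbit of $K$ contains a critical point. The aim is to construct a nested decreasing sequence of Yoccoz puzzle pieces $(P_n)_{n \geq 0}$ with $K \subset P_{n+1} \subset P_n$ and $\bigcap_{n} \overline{P_n}$ equal to the component of the filled Julia set containing $K$, and then show that
\begin{equation*}
\sum_{n=0}^{\infty} \operatorname{mod}(P_n \setminus \overline{P_{n+1}}) = +\infty.
\end{equation*}
By the Gr\"otzsch inequality, this forces the nested intersection to be a single point, so $K$ is a singleton. The puzzle is built in the standard way: fix an equipotential of the Green's function of the filled Julia set, cut its interior along finitely many external rays landing at repelling (or pre-periodic repelling) points of the Julia set, pull back by $g^n$, and let $P_n$ be the unique depth-$n$ puzzle piece containing $K$.

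The main obstacle, and the point where Qiu and Yin go beyond the original Branner--Hubbard treatment of the cubic case, is establishing the divergence of the modulus sum when the orbit of $K$ persistently shadows a recurrent critical orbit without that critical orbit ever entering a periodic component. The strategy is to encode the combinatorics via the Branner--Hubbard tableau of $K$ relative to each critical orbit, and to classify critical points as ``periodic critical'', ``escaping'', or ``persistently recurrent'': the hypothesis rules out any ``periodic critical'' class influencing the orbit of $K$. An inductive rescaling of puzzle pieces, together with a careful accounting of the branching degree of $g^n$ restricted to $P_n$ via Riemann--Hurwitz, then produces recurrent annuli of definite modulus that pull back with bounded multiplicity and yield the required divergent sum. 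Parabolic periodic points and Siegel discs are handled by choosing the initial cutting rays to land at pre-periodic repelling points that separate the problematic indifferent dynamics from $K$, so that the puzzle construction proceeds uniformly.
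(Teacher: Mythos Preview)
The paper does not prove this statement at all: it is quoted verbatim from Qiu and Yin \cite{QY} and used as a black box in the proof of Theorem~\ref{coroll}. There is therefore no ``paper's own proof'' to compare your proposal against. Your sketch is a reasonable high-level outline of the actual Qiu--Yin argument (Yoccoz puzzles, Branner--Hubbard tableaux, divergence of the modulus sum via the KSS nest or equivalent), but none of that machinery appears anywhere in this paper, and it would be out of place to include it.

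One minor correction to your easy direction: you appeal to ``Theorem~\ref{components} with $g$ in place of $f$'', but that theorem is stated and proved only for transcendental entire functions with $A_R(f)$ a spider's web, and concerns components of $A(f)^c$, not components of the filled Julia set of a polynomial. The analogous fact for polynomials (that $g$ maps a component of the filled Julia set onto a component) is of course true and elementary, but it is not what Theorem~\ref{components} says, so you should either prove it directly or cite a standard reference rather than invoke a theorem whose hypotheses are not met.
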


\begin{theorem*} [Roesch and Yin \cite{RY1, RY2}]
If $ g $ is a polynomial of degree at least $ 2 $, then any bounded Fatou component which is not a Siegel disc is a Jordan domain. 
\end{theorem*}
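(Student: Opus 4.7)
The plan is to reduce to periodic Fatou components and then establish local connectivity of their boundaries via Yoccoz-style puzzle techniques, concluding with a dynamical injectivity argument.

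First, I would reduce to the periodic case. By Sullivan's No-Wandering-Domains theorem, every bounded Fatou component $U$ of $g$ is preperiodic, so $g^k(U) = V$ for some $k \geq 0$ and some periodic component $V$. All bounded Fatou components of a polynomial are simply connected (by the maximum principle applied to $g^n$), and $g^k : U \to V$ is a proper holomorphic map of finite degree. If $V$ is a Jordan domain, then extending $g^k$ to a continuous branched covering $\overline{U} \to \overline{V}$ and using the fact that $\partial U$ is a connected set mapping surjectively onto $\partial V$ with finite fibres, one deduces that $\partial U$ is also a Jordan curve. It therefore suffices to treat the case where $V$ itself is periodic.

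Second, by the classification of periodic Fatou components (Herman rings being impossible for polynomials), a periodic bounded Fatou component of $g$ that is not a Siegel disc is an attracting, super-attracting, or parabolic basin. In each of these cases, there is a canonical linearizing coordinate near the attracting or parabolic cycle — B\"ottcher, Koenigs, or Fatou coordinates respectively — which exhibits the first-return map $g^p : V \to V$ (where $p$ is the period of $V$) as a proper holomorphic map of some degree $d_V \geq 2$ on $V$, and which provides a preferred Riemann map $\phi : \mathbb{D} \to V$ with dynamically meaningful boundary behaviour.

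Third, the heart of the argument is to establish that $\partial V$ is locally connected. Following Roesch and Yin, I would construct a Yoccoz puzzle for $g$: starting from equipotentials of the B\"ottcher coordinate at infinity and external rays landing at selected repelling periodic points on $\partial V$, one builds a nested sequence of puzzle pieces $P_n(z)$ around each $z \in \partial V$. Local connectivity follows once the diameters of the $P_n(z)$ shrink to zero, which via Gr\"otzsch-type inequalities reduces to showing that the sum of the moduli of the annuli between successive puzzle pieces diverges. When no critical point in $V$ has a recurrent orbit, this follows from a direct expansion estimate using the linearizing coordinate; when some critical orbit is recurrent, one invokes the principal-nest and tableau analysis of Yoccoz (adapted by Roesch--Yin to the attracting and parabolic settings, with additional care near a parabolic fixed point via Fatou coordinates). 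Once $\partial V$ is locally connected, Carath\'eodory's theorem extends $\phi$ continuously to $\overline{\mathbb{D}}$.

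Fourth, to upgrade local connectivity to the Jordan curve property, I would show that the extension $\bar{\phi} : \partial\mathbb{D} \to \partial V$ is injective. The induced boundary dynamics is semi-conjugate to $\zeta \mapsto \zeta^{d_V}$ on the unit circle, so any nontrivial identification of boundary points would be forward invariant under this expanding map and hence propagate to a dense set of identifications; but distinct external rays landing at distinct repelling periodic points on $\partial V$ contradict any such pattern. The main obstacle is the third step: proving shrinking of puzzle pieces in the recurrent case with critical interactions, which is where the substantive technical content of \cite{RY1, RY2} is concentrated.
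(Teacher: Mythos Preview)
The paper does not prove this theorem. It is stated with attribution to \cite{RY1, RY2} and then invoked as a black box in the proof of Theorem~\ref{coroll}(a)(ii); there is no argument in the paper to compare your proposal against.

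For what it is worth, your outline is a reasonable high-level summary of the Roesch--Yin strategy, but a couple of points are imprecise. In your third step, the puzzle used by Roesch and Yin is built from both external rays (in the basin of infinity) \emph{and} internal rays (inside $V$, via the linearizing or Fatou coordinate), cut along equipotentials on each side; the critical recurrence that governs shrinking of puzzle pieces is that of critical points in $J(g)$, not critical points in $V$ (those simply converge to the attracting cycle). In your fourth step, the claimed semi-conjugacy of the boundary map to $\zeta \mapsto \zeta^{d_V}$ presupposes that the Riemann map intertwines the dynamics, which is part of what must be established; the actual injectivity argument in \cite{RY1, RY2} is tied to the puzzle structure itself (distinct prime ends lie in distinct shrinking puzzle pieces). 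None of this affects the present paper, which simply quotes the result.
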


\begin{proof}[Proof of Theorem \ref{coroll}]
Let $ K $ be a component of $ A(f)^c $ with bounded orbit.  Then by Theorem \ref{poly}, any component of $ A(f)^c $ in the orbit of $ K $ is quasiconformally homeomorphic to a component of the filled Julia set of some polynomial $ g $ of degree at least $ 2 $. Since periodic orbits and critical points are preserved by the homeomorphism, it follows from the above result of Qiu and Yin that $ K $ is a singleton if and only if its orbit includes no periodic component of $ A(f)^c $ containing a critical point.  Wandering components of $ A(f)^c $ clearly have no periodic components in their orbit, so if $ K $ is a wandering component, then it must be a singleton. This proves part (a)(i).

To prove part (a)(ii), let $ K $ again denote a component of $ A(f)^c $ with bounded orbit.  If the interior of $ K $ is non-empty, then by Theorem \ref{buried} the interior must consist of one or more components of $ F(f). $ Now the quasiconformal homeomorphism obtained in the proof of Theorem \ref{poly} maps the interior of a component of $ A(f)^c $ onto the interior of a component of the filled Julia set of a polynomial $ g $ of degree at least $ 2 $, which consists of Fatou components of $ g $ that must be non-wandering.  Part (a)(ii) now follows immediately from the above result of Roesch and Yin, since Siegel discs and Jordan curves are clearly preserved by the homeomorphism.

Part (b) follows from part (a)(i) because $ f $ has only countably many critical points. 
\end{proof}

\textit{Remark.}  In \cite[Theorem 3]{Z1}, Zheng used a method similar to that adopted in the proof of Theorem \ref{poly} to show that, if $ f $ is a \tef\ with a \mconn\ Fatou component, and if $ U $ is any wandering Fatou component, then there exists a subsequence $ f^{n_k} $ of $ (f^n)_{n \in \N} $ such that $ f^{n_k} \vert_U \to \infty $ as $ k \to \infty $.  Thus, the orbit of every wandering Fatou component is unbounded.  

Note that it follows from our Theorem \ref{coroll}(a)(ii) that the orbit of every wandering Fatou component is unbounded whenever $ A_R(f) $ is a \sw.

In \cite[Theorem 4]{Z1}, Zheng used the same method as in his Theorem 3 to show that every wandering Fatou component has an unbounded orbit for \tef s such that
\begin{equation}
\label{min}
m(r, f) := \min{ \lbrace \vert f(z) \vert: \vert z \vert = r \rbrace} > r, \text{  for an unbounded sequence of } r.   
\end{equation}
Zheng's proof of this result could readily be adapted to show that, if there is a sequence of bounded, \sconn\ domains $ D_n $ with smooth boundaries such that:
\begin{itemize}
\item $ \bigcup_{n \in \N} D_n = \C $,
\item $ \overline{D}_n \subset D_{n+1} $, for $ n \in \N, $ and
\item $ f(\partial D_n) $ surrounds $ D_{n+1} $ for $ n \in \N, $
\end{itemize}
then the orbit of every wandering Fatou component is unbounded. This result would then cover those \tef s for which (\ref{min}) holds, as well as those for which $ A_R(f) $ is a \sw\ (since for such functions the existence of a sequence of domains $ D_n $ with the above properties follows from our proof of Theorem \ref{poly}).

\section{Periodic components of $ A(f)^c $} 
\label{empty}
\setcounter{equation}{0} 

In this section, we prove Theorem \ref{singleton}, which states that, if $ A_R(f) $ is a \sw, then $ A(f)^c $ has singleton periodic components, and these components are dense in $ J(f). $

Our proof makes use of earlier results from this paper, together with the method used by Bergweiler \cite{wB00} in his alternative proof of the result due to Dom\'{i}nguez~\cite{PD1} stated in Section \ref{intro}.  We use the following corollary of the Ahlfors five islands theorem, proved in \cite{wB00} for a wide class of meromorphic functions, but here stated for \tef s since this is all we need:

\begin{proposition*} \label{islands}
Let $ f $ be a \tef, and let $ D_1, \ldots, D_5 \subset \widehat{\C} $ be Jordan domains with pairwise disjoint closures.  Let $ V_1, \ldots, V_5 $ be domains satisfying $ V_j \cap J(f) \neq \emptyset $ and $ V_j \subset D_j $ for $ j \in \lbrace 1, \ldots, 5 \rbrace. $ Then there exist $ \mu \in \lbrace 1, \ldots, 5 \rbrace $, $ n \in \N $ and a domain $ U \subset V_\mu $ such that $ f^n : U \rightarrow D_\mu $ is conformal.
\end{proposition*}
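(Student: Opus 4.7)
The plan is to apply the Ahlfors five islands theorem in its non-normal families form inside each $V_j$, extract from this data a self-map of $\{1,\ldots,5\}$, and then use a cycle of this self-map to assemble the required conformal iterate by successive pull-backs.

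For each $j \in \{1,\ldots,5\}$, pick a point $z_j \in V_j \cap J(f)$. Since $z_j \in J(f)$, the family of iterates $\{f^n\}_{n \in \N}$ fails to be normal on $V_j$ at $z_j$. The Ahlfors five islands theorem for non-normal families (standardly deduced from Zalcman rescaling together with an application of the classical Ahlfors theorem to a non-constant meromorphic rescaling limit) then produces, inside $V_j$, a triple $(\mu_j, n_j, U_j)$ with $\mu_j \in \{1,\ldots,5\}$, $n_j \in \N$, and $U_j \subset V_j$ a Jordan domain, such that $f^{n_j} : U_j \to D_{\mu_j}$ is a conformal bijection. I would treat this version of Ahlfors as the one substantive external input and cite it from \cite{wB00}.

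Define $\sigma : \{1,\ldots,5\} \to \{1,\ldots,5\}$ by $\sigma(j) = \mu_j$. If $\sigma$ has a fixed point $j_0$, the conclusion follows at once with $(\mu, n, U) = (j_0, n_{j_0}, U_{j_0})$. Otherwise $\sigma$ is a fixed-point-free self-map of a finite set and so must contain a cycle $j_1 \mapsto j_2 \mapsto \cdots \mapsto j_k \mapsto j_1$ of length $k \geq 2$. Along this cycle I build $U$ by successive pull-backs: set $W^{(0)} = U_{j_1}$, on which $f^{n_{j_1}}$ is a conformal bijection onto $D_{j_2} \supset U_{j_2}$; and for $i = 1, \ldots, k-1$, using the conformal inverse of the map $f^{n_{j_1} + \cdots + n_{j_i}}|_{W^{(i-1)}}$ onto $D_{j_{i+1}}$, let $W^{(i)} \subset W^{(i-1)}$ denote the preimage of the Jordan domain $U_{j_{i+1}}$. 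A short induction, combining this with the fact that each $f^{n_{j_{i+1}}}$ is conformal on $U_{j_{i+1}}$ with image $D_{j_{i+2}}$, shows that $f^{n_{j_1} + \cdots + n_{j_{i+1}}}$ is a conformal bijection of $W^{(i)}$ onto $D_{j_{i+2}}$ (with indices wrapping modulo $k$). Taking $i = k-1$, the cycle closes and $W^{(k-1)} \subset W^{(0)} = U_{j_1} \subset V_{j_1}$ is a Jordan domain on which $f^{n_{j_1} + \cdots + n_{j_k}}$ is a conformal bijection onto $D_{j_{k+1}} = D_{j_1}$; so $\mu = j_1$, $n = n_{j_1} + \cdots + n_{j_k}$, $U = W^{(k-1)}$ satisfy all the required conditions.

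The principal obstacle is the non-normal families version of the Ahlfors five islands theorem itself; once this tool is in hand, the pigeonhole step and the pull-back chain are essentially combinatorial, the only routine analytic input being that a conformal preimage of a Jordan domain is again a Jordan domain.
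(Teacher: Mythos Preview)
The paper does not give its own proof of this proposition; it simply quotes the result from Bergweiler \cite{wB00} as a corollary of the Ahlfors five islands theorem. Your argument is correct and is essentially the proof given there: one applies the non-normal-families form of the five islands theorem inside each $V_j$ (using that $\{f^n\}$ is not normal at a point of $V_j\cap J(f)$) to produce a conformal island $U_j\subset V_j$ over some $D_{\sigma(j)}$, and then follows a cycle of the resulting self-map $\sigma$ of $\{1,\ldots,5\}$, composing the conformal branches to return to the starting index. Two minor remarks: the case split into ``fixed point'' versus ``fixed-point-free'' is unnecessary, since any self-map of a finite set has a cycle and your pull-back construction handles $k=1$ as well; and the substantive external input you flag (the Zalcman/Ahlfors step giving the island inside $V_j$) is exactly what \cite{wB00} supplies, so your citation is apt.
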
  

\begin{proof} [Proof of Theorem~\ref{singleton}]

It follows from Theorems \ref{uncountable}(a) and \ref{coroll}(b) that, if $ A_R(f) $ is a \sw, then there are uncountably many singleton components of $ A(f)^c $, and by Theorem \ref{buried} these lie in $ J(f). $  

Now $ J(f) $ is the closure of the backward orbit $ O^-(z) $ of any non exceptional point $ z \in J(f) $. But since $ f $ is an open mapping and $ A(f) $ is completely invariant, the preimages of singleton components of $ A(f)^c $ are themselves singleton components of $ A(f)^c $, and it therefore follows that singleton components of $ A(f)^c $ are dense in $ J(f)$. 

We now claim that singleton \textit{periodic} components of $ A(f)^c $ are dense in $ J(f). $

To prove this, let $ W $ be any \nhd\ of a point $ w \in J(f). $  Then since $ J(f) $ is perfect, $ W $ contains infinitely many points in $ J(f) $. Thus there exist $ w_j \in J(f), j \in \lbrace 1, \ldots, 5 \rbrace $ and $ \varepsilon > 0 $ such that the Jordan domains $ D_j = B(w_j, \varepsilon) $ have pairwise disjoint closures and lie in $ W $.

Now since singleton components of $ A(f)^c $ are dense in $ J(f)$, for $ j \in \lbrace 1, \ldots, 5 \rbrace $ there exist singleton components $ \lbrace z_j \rbrace $ of $ A(f)^c $ such that $ z_j \in D_j $, and by Theorem \ref{buried}(b), there are closed subsets $ X_j $ of $ A(f) \cap J(f) $ lying in $ D_j $ and surrounding~$ z_j $.  Let $ V_j $ be the bounded complementary component of $ X_j $ containing~$ z_j $.  Then $ \partial V_j \subset A(f) $ and, since $ z_j \in J(f) $, we have that $ V_1, \ldots, V_5 $ are domains satisfying $ V_j \cap J(f) \neq \emptyset $ for  $ j \in \lbrace 1, \ldots, 5 \rbrace $.   It follows that we may apply the above Proposition, obtaining $ \mu \in \lbrace 1, \ldots, 5 \rbrace $, $ n \in \N $ and a domain $ U \subset V_\mu $ such that $ f^n : U \rightarrow D_\mu $ is conformal.  

Now let $ \phi $ be the branch of the inverse function $ f^{-n} $ which maps $ D_\mu $ onto $ U. $  Then $ \phi $ must have a fixed point $ z_0 \in U \subset V_\mu $.  Furthermore, by the Schwarz lemma, this fixed point must be attracting, and because $ \phi(D_\mu) = U $ where $ \overline{U} $ is a compact subset of $ D_\mu $, we have that $ \phi^k(z) \rightarrow z_0 $ as $ k \rightarrow \infty $, uniformly for $ z \in D_\mu $.

Since $ z_0 $ is an attracting fixed point of $ \phi $, it is a repelling fixed point of $ f^n $ and hence a repelling periodic point of $ f $.  Thus $ z_0 $ lies in $ J(f) \cap A(f)^c $.  

Now $ z_0 = \phi^k(z_0) \in \phi^k(V_\mu)$ for all $ k \in \N. $  Furthermore, $ \text{diam } \phi^k(\overline{V_\mu}) \rightarrow 0 $ as $ k \rightarrow \infty $.  It follows that:
\begin{equation}
\label{single}
\bigcap_{k \in \N} \phi^k(\overline{V_\mu}) = \lbrace z_0 \rbrace. 
\end{equation}
Since $ \partial V_\mu $ lies in $ A(f) $, which is completely invariant, and $ \phi $ is conformal, we have $ \partial \phi^k (V_\mu) = \phi^k (\partial V_\mu) \subset A(f) $ for all $ k \in \N $.  But $ \phi^k(\partial V_\mu) $ surrounds $ z_0 $ for all $ k \in \N $, so $ \lbrace z_0 \rbrace $ must be a singleton component of $ A(f)^c $ by (\ref{single}).  

We have therefore shown that, in any \nhd\ of an arbitrary point of $ J(f) $, there is a singleton component of $ A(f)^c $ that is also a repelling periodic point of $ f $.  This proves the claim.

To complete the proof of the theorem, note finally that if $ f $ has a \mconn\ Fatou component, then it follows from Theorem \ref{buried}(c) that the singleton periodic components of $ A(f)^c $ constructed above are buried components of $ J(f). $
\end{proof}

\end{document}